\newtheorem{Theorem}{Theorem}[section]
\newtheorem{Lemma}[Theorem]{Lemma}
\newtheorem{Proposition}[Theorem]{Proposition}
\def\V{\mbox{Var}}
\def\R\re
\def\V{\bf V}
\def \re{{\mathbb R}}
\def \0{\lambda_{0}}
\def \S{{\bf S}}
\begin{document}
\title{Minimal hypersurfaces in $\re^n \times {\bf S}^m$}

\author[J. Petean]{Jimmy Petean}
 \address{CIMAT  \\
          A.P. 402, 36000 \\
          Guanajuato. Gto. \\
          M\'exico.}
\email{jimmy@cimat.mx}

\author[J. M. Ruiz]{Juan Miguel Ruiz}
 \address{ENES Unidad Le\'on - UNAM  \\
           Blvd.UNAM 2011 \\
           C.P. 37684 \\
          Le\'on Gto. \\
          M\'exico.}
\email{mruiz@enes.unam.mx}




\begin{abstract} We classify minimal hypersurfaces in $\re^n \times {\bf S}^m$, $n,m \geq 2$,  which are invariant by the
canonical action of $O(n) \times O(m)$. We also construct compact and noncompact examples of invariant 
hypersurfaces of constant mean curvature.
We show that the minimal hypersurfaces and the noncompact constant mean curvature hypersurfaces are all unstable.

\end{abstract}

\maketitle

\section{Introduction}
In  this article we will  construct families of complete embedded minimal hypersurfaces in the Riemannian products
$\re^n \times{\bf S}^m$,  and also some examples of families of complete embedded hypersurfaces of constant mean curvature. 
The study of minimal hypersurfaces is a very classical problem in differential geometry and there is a large literature on construction of examples. 
The most studied cases are the minimal surfaces in ${\bf S}^3$ and $\re^3$. Let us mention  for instance that 
H. B. Lawson proved in \cite{Lawson} that every compact orientable surface can be embedded as a minimal
surface in ${\bf S}^3$. Further constructions of minimal surfaces in the sphere were done by H. Karcher, U. Pinkall and
I. Sterling in \cite{Karcher} and by N. Kapouleas and S. D. Yang in \cite{Kapouleas}, among others. 
There are also plenty of constructions in other 3-manifolds, for instance recently 
F. Torralbo  in \cite{Torralbo} builded examples of minimal surfaces immersed in the Berger spheres. 
And in the last few years there has been great interest in the study of minimal surfaces in 3-dimensional 
Riemannian products: see for instance the articles by H. Rosenberg \cite{Rosenberg}, W.H. Meeks and 
H. Rosenberg \cite{Meeks} and J. M. Manzano, J. Plehnert and F. Torralbo \cite{Manzano}. 
In higher dimensional manifolds constructions are also abundant. Much work has been done considering
minimal hypersurfaces which are invariant by large groups of isometries. Very closely related to this work
is the article by H. Alencar \cite{Alencar} where the author studies minimal hypersurfaces in  $\re^{2m}$ invariant by the
action of $SO(m) \times SO(m)$. The general case of $SO(m)\times SO(n)$-invariant minimal hypesurfaces in
$\re^{m+n}$ was later treated  by H. Alencar, A. Barros, O. Palmas, J. G. Reyes and W. Santos in \cite{Alencar2}.  They
give a complete description of such minimal hypersurfaces when $m,n \geq 3$. Of great interest for the present work is also
the article by R. Pedrosa and M. Ritor\'{e} \cite{Pedrosa} where the authors study the isoperimetric problem in the
Riemannian products of $n$-dimensional simply connected spaces of constant curvature and circles. The isoperimetric
regions in these spaces are known to exist (see the articles by F. Almgren \cite{Almgren} and  F. Morgan \cite{Morgan1, Morgan2}) and
their boundaries are hypersurfaces of constant mean curvature which are invariant by the action of the orthogonal group acting
on the space with constant curvature (fixing a point). Therefore in the study of the isoperimetric problem in such regions one is
naturally led to study invariant hypersurfaces of constant mean curvature.  A detailed study of such hypersurfaces, in particular
minimal ones, was carried out by R. Pedrosa and M. Ritor\'{e} in their article. 
When studying minimal hypersurfaces invariant by the group actions as in the  articles mentioned above one is essentially dealing with
the solutions of an ordinary differential equation. The same is true for the case of hypersurfaces in   $\re^n \times {\bf S}^m$ which are invariant by the canonical
action of $O(n) \times O(m)$, which is the situation we will study in the present article. The main technical difference when changing the
circle for higher dimensional spheres is that in the first case due to the invariance of the problem by rotations of the circle the associated 
ordinary differential equation has a first integral, which helps to describe the solutions of it: this is not true when $n, m \geq 2$. 

The first and main goal of this article is to classify minimal hypersurfaces in $\re^n \times {\bf S}^m$ invariant by the
canonical action of $O(n) \times O(m)$.
Of course there is one canonical such minimal hypersuface given
by the product of $\re^n$ with a maximal hypersphere  $ {\bf S}^{m-1} \subset {\bf S}^m$. The orbit space of the action of
$O(n) \times O(m)$ in $\re^n \times {\bf S}^m$ is identified with $[0,\infty ) \times [0, \pi ]$. And an invariant hypersurface is
described by a {\it generating curve} $\varphi $ in the orbit space. The corresponding hypersurface has mean curvature 
$h$ if the curve $\varphi$ satisfies 

$$\sigma'(s) = (m-1) \ \frac{\cos (y(s))}{\sin (y(s))} \cos (\sigma (s)) - (n-1) \ \frac{\sin \sigma (s) }{x(s) } - h(s) .$$

\noindent
where $\varphi = (x(s), y(s))$ is parametrized by arc length and $\varphi ' (s) = \cos (\sigma (s) , \sin \sigma (s) )$
(and $h$ is taken  with respect to the normal vector ${\bf n}=(\sin \sigma (s) , -\cos \sigma (s))$. 
We will describe the invariant minimal (or constant mean curvature) hypersurfaces by studying solutions to this equation with
$h=0$ (or a nonzero constant).

We will give a complete description of invariant minimal hypersurfaces:

\begin{Theorem} Let $n,m \geq 2$ and consider hypersurfaces  in   $\re^n \times {\bf S}^m$ invariant by the action of $O(n) \times O(m)$.
There is a one dimensional family of invariant minimal  embeddings of $\re^n \times {\bf S}^{m-1}$ parametrized by $r\in (0,\pi )$.
There is a 2-dimensional family of invariant  minimal immersions (with self-intersections)
 of ${\bf S}^{n-1} \times {\bf S}^{m-1} \times \re$ parametrized by
$(r,s) \in (0,\infty ) \times (0,\pi )$. There are two 1-dimensional families of invariant minimal embeddings of ${\bf S}^{n-1} \times \re^m$
parametrized by $r\in \re_{>0}$ (each family is the reflection of the other around $\re^n \times {\bf S}^{m-1}$).
These are all minimal hypersurfaces in $\re^n \times {\bf S}^m$  invariant by the action. 
\end{Theorem}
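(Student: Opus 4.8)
The plan is to treat the statement as a problem in the qualitative theory of an autonomous first-order system. Writing the generating curve as $\varphi=(x(s),y(s))$ with $\varphi'=(\cos\sigma,\sin\sigma)$, the minimal condition ($h=0$) together with $x'=\cos\sigma$, $y'=\sin\sigma$ gives the closed system for $(x,y,\sigma)$ on the half-strip $(0,\infty)\times(0,\pi)\times\re$. The first observation I would record is a clean reinterpretation: the weighted length of the curve is $\int x^{n-1}\sin^{m-1}(y)\,ds$, the volume of the orbit $\mathbf{S}^{n-1}(x)\times\mathbf{S}^{m-1}(\sin y)$, so the generating curves of invariant minimal hypersurfaces are exactly the geodesics of the conformal metric $(x^{n-1}\sin^{m-1}y)^{2}(dx^{2}+dy^{2})$, i.e. of $e^{2\phi}(dx^2+dy^2)$ with $\phi=(n-1)\log x+(m-1)\log\sin y$; one checks directly that the structure equation is $\sigma'=-\partial_{\mathbf n}\phi$. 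I would also note the reflection symmetry $(y,\sigma)\mapsto(\pi-y,-\sigma)$ fixing the equatorial cylinder $y\equiv\pi/2$, which is the distinguished flat solution $\sigma\equiv 0$.

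Next I would establish the boundary forcing. Since $e^{\phi}\to 0$ on all three finite edges $\{x=0\},\{y=0\},\{y=\pi\}$ while $e^{\phi}\to\infty$ as $x\to\infty$, the only way a maximal geodesic can end in finite length is at a finite edge, and the blow-up of the curvature terms controls how. As $x\to 0$ the term $(n-1)\sin\sigma/x$ forces $\sin\sigma\to 0$, so a curve reaching the axis does so perpendicularly, $\sigma\to 0$; as $y\to 0,\pi$ the term $(m-1)\cot(y)\cos\sigma$ forces $\cos\sigma\to 0$, so $\sigma\to\pm\pi/2$. I would then run the (singular) ODE from each boundary point to get existence and uniqueness of the capped solution, and verify that perpendicular contact makes the revolved hypersurface smooth across the collapsed orbit: capping the shrinking $\mathbf{S}^{n-1}$ at $x=0$ produces an $\re^{n}$ factor, and capping the shrinking $\mathbf{S}^{m-1}$ at a pole produces an $\re^{m}$ factor.

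I would then exclude all compact examples with a single conceptual lemma: there is no closed invariant minimal hypersurface. The cleanest argument is that any ambient linear coordinate on the $\re^{n}$ factor has parallel gradient, so it restricts to a harmonic function on every minimal (even immersed) submanifold; on a closed manifold it is constant, forcing the submanifold into a slice $\{p\}\times\mathbf{S}^{m}$, which is impossible for a hypersurface once $n\geq 2$ by dimension count. Consequently a complete generating curve cannot close up nor join two finite edges, so each maximal curve caps \emph{at most one} finite edge; the number of ends that cap is therefore $0$ or $1$, and this trichotomy is precisely the split into the three stated families.

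Finally I would carry out the global stratification. For a curve capping $x=0$ at height $r=y(0)\in(0,\pi)$ I would prove an invariant-region statement — $|\sigma|<\pi/2$ is preserved, so $x$ is strictly increasing and the curve is a graph escaping to $x=\infty$ with $y\to\pi/2$ — which yields the embedded $\re^{n}\times\mathbf{S}^{m-1}$ family; the reflection symmetry then gives the two embedded $\mathbf{S}^{n-1}\times\re^{m}$ families (capping $y=0$ and $y=\pi$), parametrized by the capping radius $r=x_0\in\re_{>0}$ and interchanged by reflection across $\re^n\times\mathbf{S}^{m-1}$. The remaining solutions never reach a finite edge, hence stay in the interior with both $\mathbf{S}^{n-1}$ and $\mathbf{S}^{m-1}$ present and topology $\mathbf{S}^{n-1}\times\mathbf{S}^{m-1}\times\re$; these form the generic two-parameter family, which I would parametrize by the coordinates $(r,s)\in(0,\infty)\times(0,\pi)$ of the unique vertex where $\sigma=\pi/2$, and whose self-intersections account for the immersed (non-embedded) character. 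A parameter count — the solution space is two-dimensional modulo reparametrization, and passing through the axis, respectively through a pole, is a codimension-one condition — confirms that Types 1 and 3 are the boundary strata of the two-parameter family of Type 2, so the list is exhaustive. The main obstacle I anticipate is exactly this global qualitative analysis in the absence of the first integral available for $m=1$: proving the invariant region $|\sigma|<\pi/2$ for the capped curves (hence monotonicity of $x$ and escape to infinity rather than return or spiraling into a pole) is the delicate step, and it is what pins down both the embeddedness and the completeness of the classification.
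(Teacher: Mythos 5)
Your skeleton is essentially the paper's: the trichotomy according to which finite edge (if any) the generating curve caps is exactly the paper's split into cases P1, P3, P2 on $\inf x(s)$, and your perpendicularity-at-the-edges observation matches the paper's setup. Two of your ingredients are genuinely different and worth noting: the reformulation of generating curves as geodesics of the conformal metric $\left( x^{n-1}\sin^{m-1}y \right)^2 (dx^2+dy^2)$ (correct, though you never exploit it), and the harmonic-coordinate argument excluding compact invariant minimal hypersurfaces, which is a clean conceptual substitute for something the paper only handles implicitly. However, the proposal has a genuine gap, and it sits precisely where you flag ``the main obstacle'': everything in your last paragraph --- the invariant region $|\sigma|<\pi/2$ for capped curves, the fact that they stay inside the open strip rather than crashing into $y=0$ or $y=\pi$, global existence for all $x$, and the oscillation around $\pi/2$ --- is asserted, not proved. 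This is not a routine verification; it is the entire analytic content of the paper's proof. The paper devotes Proposition 3.2 to it, and that proposition rests on three preparatory results: Lemma 3.1 (a solution of $f''\leq (1+f'^2)\,h(f)$ with $h(x)(d-x)\leq -1$ near $d$ cannot increase to the level $d$, ruling out escape to $y=\pi$ in finite $x$), and the reflection-comparison Lemmas 2.2 and 2.3 (after a local maximum $E$ and a local minimum $e$, the next local maximum is $<E$ and the next local minimum is $>e$), which keep the extrema bounded away from the edges, prevent finite-time breakdown, and prevent spiraling into a pole. Without these, none of the three families is actually constructed --- completeness, embeddedness, and the stated topology are exactly what Proposition 3.2 delivers --- and exhaustiveness fails as well.

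Two further soft spots. First, your claim that capped curves escape with $y\to\pi/2$ is stronger than what the paper establishes (oscillation around $\pi/2$, with no statement about decay of the amplitude) and is not needed. Second, exhaustiveness cannot be settled by the closing ``parameter count / codimension-one stratum'' heuristic: a dimension count does not exclude other behaviors, e.g.\ a curve whose infimum of $x$ is positive but not attained, or a branch developing a second vertical tangent, or a capped curve terminating at finite $x$ with unbounded derivative. In the paper these possibilities are excluded by the comparison lemmas inside the proof of Proposition 3.2, case by case, not by counting parameters. So the correct assessment of your proposal is: right global architecture, one attractive alternative lemma (the compactness exclusion), but the central ODE analysis that the theorem actually turns on is missing.
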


Figure 1 shows examples of the corresponding generating curve for each of the three cases (they correspond to the case $m=n=2$).

\begin{figure}[h!]

\subfigure[$ (x_0,y_0,\sigma_0)=(0,3,0)$]{
	 		 			  \includegraphics[scale=0.500]{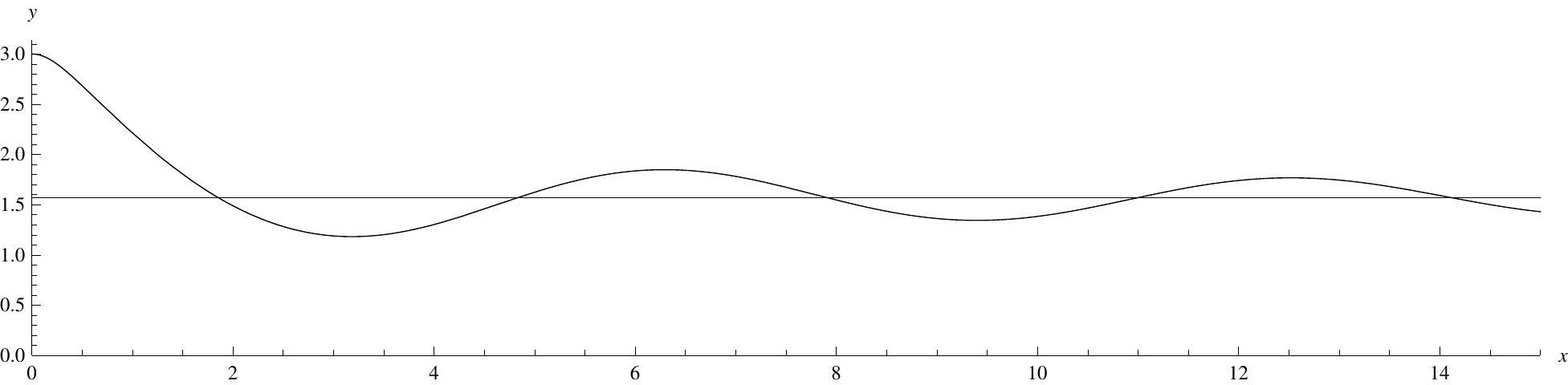}}
\subfigure[$ (x_0,y_0,\sigma_0)=(1,0,\frac{\pi}{2}))$.]{
 	 		 			  \includegraphics[scale=0.500]{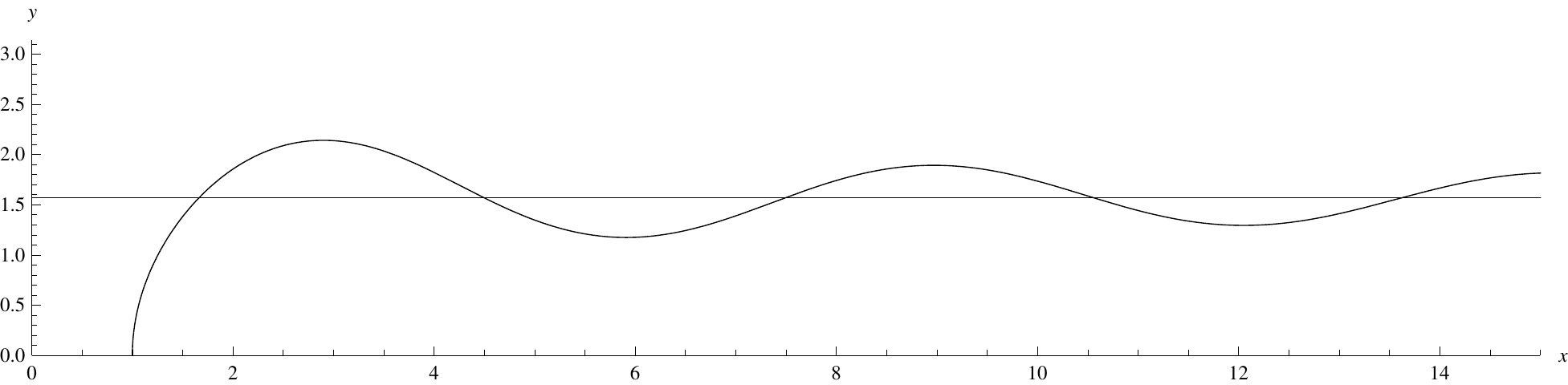}} 		 	

\subfigure[$(x_0,y_0,\sigma_0)=(1,2,\frac{\pi}{2})$.]{		
 	 		 			  \includegraphics[scale=0.500]{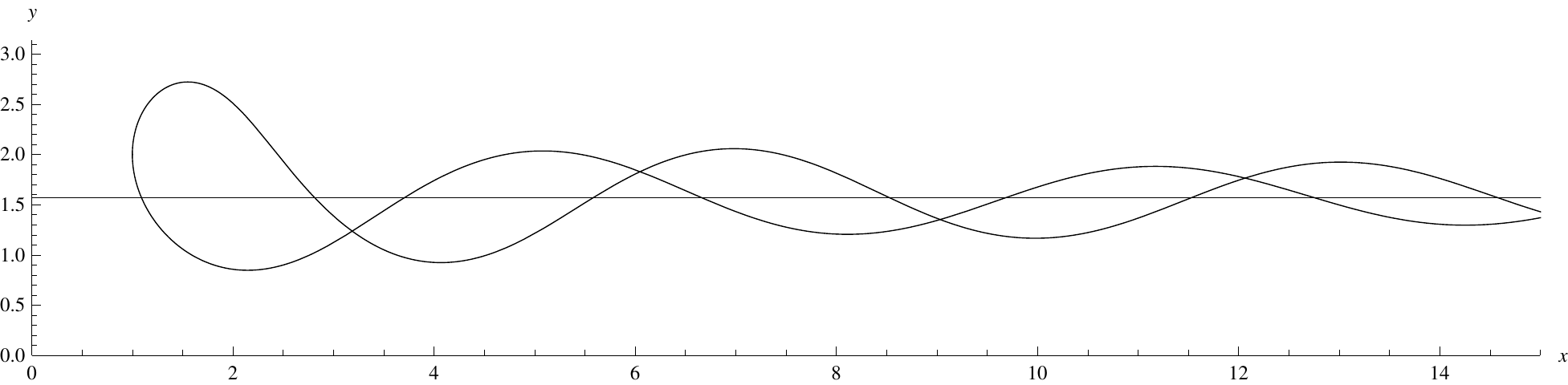}}
\subfigure[$ (x_0,y_0,\sigma_0)=(1,\frac{\pi}{2},\frac{\pi}{2})$.]{
 	 		 			  \includegraphics[scale=0.500]{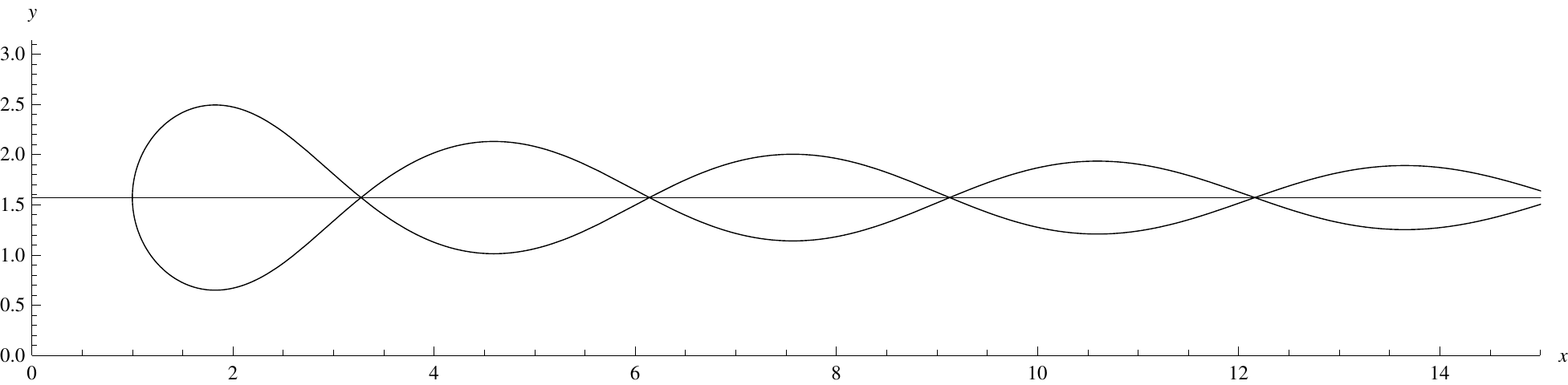}}

\caption{Generating curves of minimal hypersurfaces in $\S^2 \times \re^2$, with  initial conditions $ (x_0,y_0,\sigma_0)$.}
\label{fig:1}
\end{figure}

We will use similar techniques to show the existence of families of embedded constant mean curvature
hypersurfaces invariant by the action of $O(n) \times O(m)$. In this case the situation is considerably more
complicated and we will not give a complete description. One of the main reasons why one is interested in understanding invariant
hypersurfaces of constant mean curvature is to compute the isoperimetric function of  $\re^n \times {\bf S}^m$. By standard symmetrization arguments
one can see that an isoperimetric region in $\re^n \times {\bf S}^m$  can be of two types: either a product of the sphere with a ball in Euclidean space or a ball type region invariant 
by the action of $O(n) \times O(m)$. The boundary of a region of the second type is an invariant hypersphere of constant mean curvature: the corresponding
generating curve will start perpendicular to  the $y$-axis and decrease until reaching the $x$-axis.  We will then concentrate in hypersurfaces {\it starting} in the
$y$-axis, since these are the ones that could give isoperimetric regions. There is of course a canonical example: if $x_h \in (0,\pi )$ is defined by the
equation $\cot (x_h )= \frac{h}{m-1}$ and ${\bf S}^{m-1}_r$ is  the hypersphere of points at distance $r$ from the south pole $S\in {\bf S}^m$ then
$ \re^n \times  {\bf S}^{m-1}_{x_h}$ is an invariant hypersurface of constant mean curvature $h$. 
We will prove:

\begin{Theorem} For any $h \in \re_{>0}$ there is a one-dimensional family of $O(n) \times O(m)$-invariant embedded hypersurfaces of constant mean curvature
$h$ in $\re^n \times S^m$ diffeomorphic to $\re^n \times {\bf S}^{m-1}$ parametrized by $A\in (0,b)$ where $b \in (x_h , \pi ]$.
If  $b \neq \pi$ then there is an embedding of ${\bf S}^{n+m-1}$ with constant
mean curvature $h$, invariant by the action of  $O(n) \times O(m)$. 

\end{Theorem}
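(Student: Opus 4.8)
The plan is to study the generating-curve equation as a shooting problem from the $y$-axis. Writing it as the first order system
$$x'=\cos\sigma,\qquad y'=\sin\sigma,\qquad \sigma'=(m-1)\,\frac{\cos y}{\sin y}\,\cos\sigma-(n-1)\,\frac{\sin\sigma}{x}-h,$$
I would prescribe, for each $A\in(0,\pi)$, the initial data $x(0)=0$, $y(0)=A$, $\sigma(0)=0$, so that the curve leaves the $y$-axis perpendicularly and the invariant hypersurface closes up smoothly where the $\mathbf{S}^{n-1}$-orbit collapses. The term $(n-1)\sin\sigma/x$ is singular at $s=0$, so the first step is to show this singular initial value problem is well posed: one checks that $\sin\sigma/x\to\sigma'(0)$ as $s\to0^+$, so the equation has a removable singularity along admissible solutions, and then obtains existence, uniqueness and smooth dependence on $A$ by the standard reduction of such rotationally symmetric equations to a regular system (or by a contraction argument on the associated integral equation). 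Passing to the limit $s\to0^+$ in the third equation gives $n\,\sigma'(0)=(m-1)\cot A-h$, so that $\sigma'(0)$ is positive for $A<x_h$, zero for $A=x_h$, and negative for $A>x_h$; in particular $A=x_h$ reproduces the flat product $\re^n\times\mathbf{S}^{m-1}_{x_h}$ as the constant solution $y\equiv x_h$, $\sigma\equiv0$.

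Next I would establish the qualitative behaviour of the shooting family. The key observation is that a solution can never cross $\sigma=\pi/2$: at such a point the first term of $\sigma'$ vanishes and $\sigma'=-(n-1)/x-h<0$. Consequently, as long as $\sigma$ stays above $-\pi/2$, the curve is a graph $y=y(x)$ over $[0,\infty)$ moving to the right, and the associated hypersurface is embedded and diffeomorphic to $\re^n\times\mathbf{S}^{m-1}$. To see that such graphs are complete and escape to $x=+\infty$, I would analyse the system for large $x$: the damping term $(n-1)\sin\sigma/x$ disappears, the limiting autonomous system has $(y,\sigma)=(x_h,0)$ as a center, and the true solutions behave like the damped oscillator $u''+\frac{n-1}{x}u'+(m-1)\csc^2(x_h)\,u=0$ for $u=y-x_h$, whose amplitude decays to zero, so graph solutions spiral into the line $y=x_h$. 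I would then set
$$b=\sup\{A\in(0,\pi):\ \sigma(s)\in(-\pi/2,\pi/2)\ \text{for all }s\},$$
and use continuous dependence on $A$ together with $\sigma\equiv0$ at $A=x_h$ to conclude that this condition is open and holds on an interval $(0,b)$ with $b>x_h$, producing the asserted one-dimensional family of embedded $\re^n\times\mathbf{S}^{m-1}$'s.

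Finally I would treat the threshold $A=b$ and the sphere. The mechanism is a second barrier, now at the pole $y=0$: there the dominant balance is $\sigma'\approx(m-1)\cos\sigma/y$, and integrating $d\sigma/dy$ in this regime gives $|\cos\sigma|\sim C\,y^{-(m-1)}$, which is compatible with $|\cos\sigma|\le1$ as $y\to0$ only if $\cos\sigma\to0$. Hence a generating curve can reach the $x$-axis only along the separatrix on which $\sigma\to-\pi/2$, so that whenever it does reach $y=0$ it meets the $x$-axis perpendicularly and the hypersurface closes up smoothly where the $\mathbf{S}^{m-1}$-orbit collapses. This separatrix is exactly the value $A=b$: for $A<b$ the downswing of the curve is deflected back into the strip and the solution escapes to infinity, whereas at $A=b$ the solution reaches $y=0$ with vertical tangent. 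The resulting curve runs from $(0,b)$ on the $y$-axis to a point $(x_1,0)$ on the $x$-axis, meeting both axes perpendicularly, and its revolution is a smooth embedded $\mathbf{S}^{n+m-1}=\mathbf{S}^{n-1}\ast\mathbf{S}^{m-1}$. When $b=\pi$ no such separatrix occurs in the admissible range and there is no invariant sphere.

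The main obstacle is the global control needed to make the shooting argument rigorous: showing that below the threshold the curve stays a graph and actually limits onto $y=x_h$ (rather than developing a vertical tangent at finite $x$), and pinning down the borderline behaviour at $A=b$, namely that the separatrix trajectory reaches the pole exactly, with $\sigma\to-\pi/2$, and stays embedded up to that point. The degenerate nature of the equilibrium $(x_h,0)$ in the limiting system and the two singular barriers at $x=0$ and $y=0$ are the sources of the difficulty, and I expect the careful asymptotic analysis near $y=0$ to be the crux of the argument.
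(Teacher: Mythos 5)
Your setup matches the paper's: the same shooting problem from the $y$-axis with $x(0)=0$, $y(0)=A$, $\sigma(0)=0$, the removable singularity at $s=0$ with $n\,\sigma'(0)=(m-1)\cot A-h$, a threshold value $b$ above $x_h$, and a sphere produced exactly at the threshold. The barrier computation at $\sigma=\pi/2$ and the separatrix heuristic at $y=0$ (a curve can meet the $x$-axis only with $\cos\sigma\to 0$) are both correct and consistent with the paper. However, the two points you defer as ``the main obstacle'' are precisely where the paper's proof lives, and one step you do assert would fail as written. Openness of the set $\{A:\ \sigma(s)\in(-\pi/2,\pi/2)\ \mbox{for all } s\}$ does not follow from continuous dependence on $A$: continuous dependence controls solutions only on compact intervals of $s$, while your condition is an infinite-time confinement; nor does your definition of $b$ as a supremum by itself show that \emph{every} $A\in(0,b)$ is good. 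The paper obtains the interval structure by different means: it linearizes equation (2) at the constant solution $p\equiv x_h$ and shows the linearized equation (9) is oscillatory (Lemma 4.1), so solutions with $A$ near $x_h$ have a first interior extremum; then the reflection-comparison Lemmas 2.2 and 2.3 show that successive maxima decrease and successive minima increase, so oscillating solutions stay in a compact strip inside $(0,\pi)$ and extend to all $x$ (using also Lemma 3.1 and bounds on $p'$, $p''$ to rule out finite-time breakdown); finally a separate argument shows every $A\in(0,x_h)$ is good. Your damped-oscillator picture only controls solutions already close to $x_h$; for large amplitude nothing in your sketch replaces these nonlinear comparison lemmas, and the assertion that solutions ``spiral into $y=x_h$'' is both stronger than what is needed and not a consequence of linearization.

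The more serious gap is at the threshold. Your analysis near $y=0$ shows at best that \emph{if} the curve $p(b,\cdot)$ reaches the $x$-axis, it does so perpendicularly; it does not show that it reaches the axis at all. The competing scenario --- $p(b,\cdot)$ monotone decreasing, developing a vertical tangent ($\sigma$ crossing $-\pi/2$) at some positive height $y_F>0$ and turning back --- is not excluded by anything in your outline, and such curves genuinely occur for this equation (they are the curves with a point $y>0$, $x'=0$ described with Figure 3 in the paper). The paper rules this out at $A=b$ with an idea your proposal is missing: if $p(b,\cdot)$ had a vertical tangent at $y_F>0$, one rewrites the curve near that point as a graph $x=f(y)$ solving the regular equation (3), so the curve extends smoothly across the vertical tangent; continuous dependence \emph{on a compact interval} (which is legitimate) then forces $p(a,\cdot)$ for $a$ slightly below $b$ to develop a vertical tangent at positive height as well, contradicting that all such $a$ give global graphs. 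Hence $y_F=0$, and a final computation with $q=p''/(1+(p')^2)$ shows $p'\to-\infty$ there, giving the perpendicular contact and the embedded ${\bf S}^{n+m-1}$. Without the comparison lemmas for the bounded-oscillation regime and this inverse-function continuation at the threshold, your argument does not close.
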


The isoperimetric problem   in   $\re^n \times {\bf S}^m$ can be solved. Moreover it is known that 
for small values of the volume the corresponding isoperimetric region must be a ball: explicitly we will point out in Lemma 5.2 that ${\bf S}^{n-1} (r) \times
{\bf S}^m \subset \re^n \times {\bf S}^m$ is unstable (as a constant mean curvature hypersurface) if $r<\sqrt{n-1}/\sqrt{m}$ (by
${\bf S}^{n-1} (r)$ we denote the sphere of radius $r$) . For instance in the 
case of $\re^2 \times {\bf S}^2$  this says that isoperimetric regions of volume 
less than $2\pi^2$ in  $ \re^2 \times {\bf S}^2$  are balls.

Therefore we know that for some (large) values of $h$ there are invariant hyperspheres
like in the second part of the theorem. Solving the equation numerically it seems that if  for some value of $h$ there exists such hypersphere 
then it is unique and it {\it divides} generating curves
 like the ones in the Theorem and generating curves with self-intersections. If one could prove that this is actually the case then one would have a good
understanding of the isoperimetric profile of $\re^n \times {\bf S}^m$. This should be compared to the case of spherical cylinders $\re \times {\bf S}^m$ treated 
by R. Pedrosa in \cite{Pedrosa2}.

In Figure 2 we show  three types of  generating curves of hypersurfaces of constant mean curvature $h=1.8$ that appear for  the
case $n=m=2$. There is a value $y_0  \approx 1.592$ such that the generating curve of the hypersurface of constant mean curvature $h$ starting at
$(0, y_0 )$ ends perpendicular at the $x$-axis, giving an embedded ${\bf S}^3$: the curves starting at $y<y_0$ produce embeddings of $\re^2 \times {\bf S}^1$ and
the curves starting at $y>y_0$ produce immersions with self-intersections. In Figure 3 we still consider $n=m=2$ but $h=3$. Again there is a value 
 $y_0  \approx 0.98$ such that the generating curve of the hypersurface of constant mean curvature $h$ starting at
$(0, y_0 )$ ends perpendicular at the $x$-axis, giving an embedded ${\bf S}^3$:  the curves starting at $y<y_0$ produce embeddings of $\re^2 \times {\bf S}^1$ and
the curves starting at $y>y_0$ produce immersions or embeddings of constant mean curvature hypersurfaces, but for which there is a point with $y>0, \  x'=0$ (and
the corresponding hypersurface cannot be the boundary of an isoperimetric region).

 \begin{figure}[h!]
 
  \subfigure[$(x_0,y_0,\sigma_0)=(0,1,0)$.]{		
   	 		 			  \includegraphics[scale=0.750]{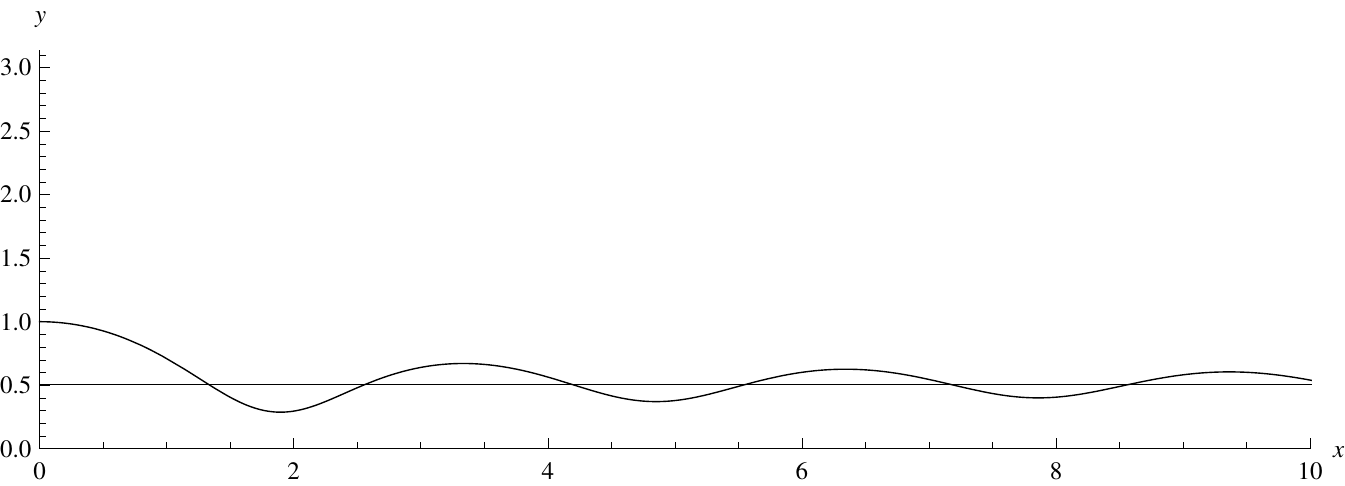}}
 \subfigure[$ (x_0,y_0,\sigma_0)=(0,1.592,0).$]{
 	 		 			  \includegraphics[scale=0.750]{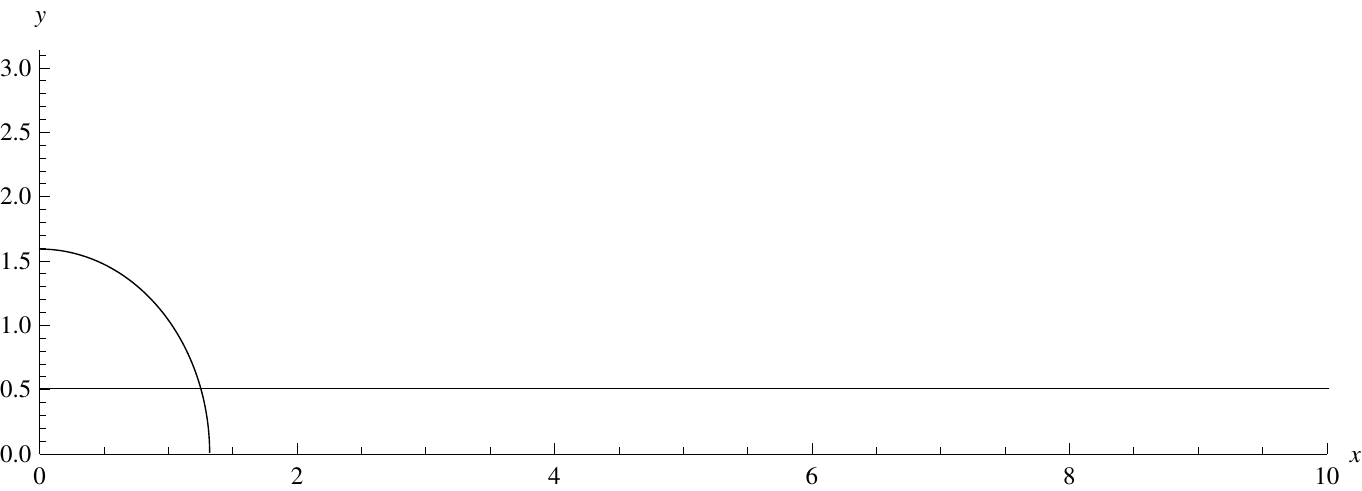}}
 \subfigure[$ (x_0,y_0,\sigma_0)=(0,2,0)$.]{
  	 		 			  \includegraphics[scale=0.750]{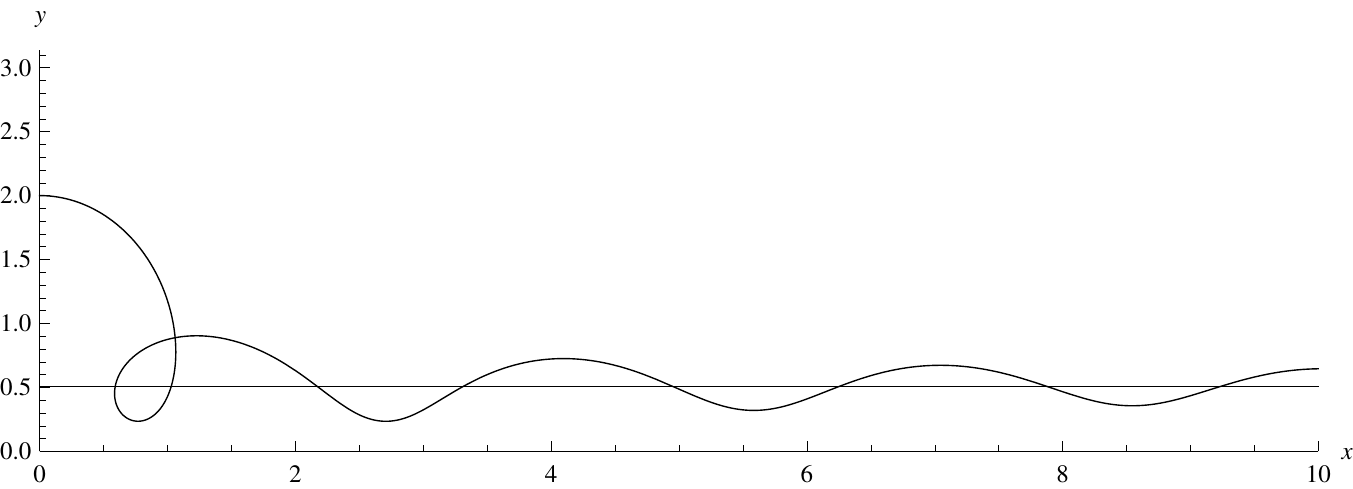}}

 \caption{Generating curves of minimal hypersurfaces in $\S^2 \times \re^2$, with  initial conditions $ (x_0,y_0,\sigma_0)$ and constant mean curvature $h=1.8$.}
 \label{fig:2}
 \end{figure}

  \begin{figure}[h!]
  
   \subfigure[$(x_0,y_0,\sigma_0)=(0,0.6,0)$.]{		
    	 		 			  \includegraphics[scale=0.50]{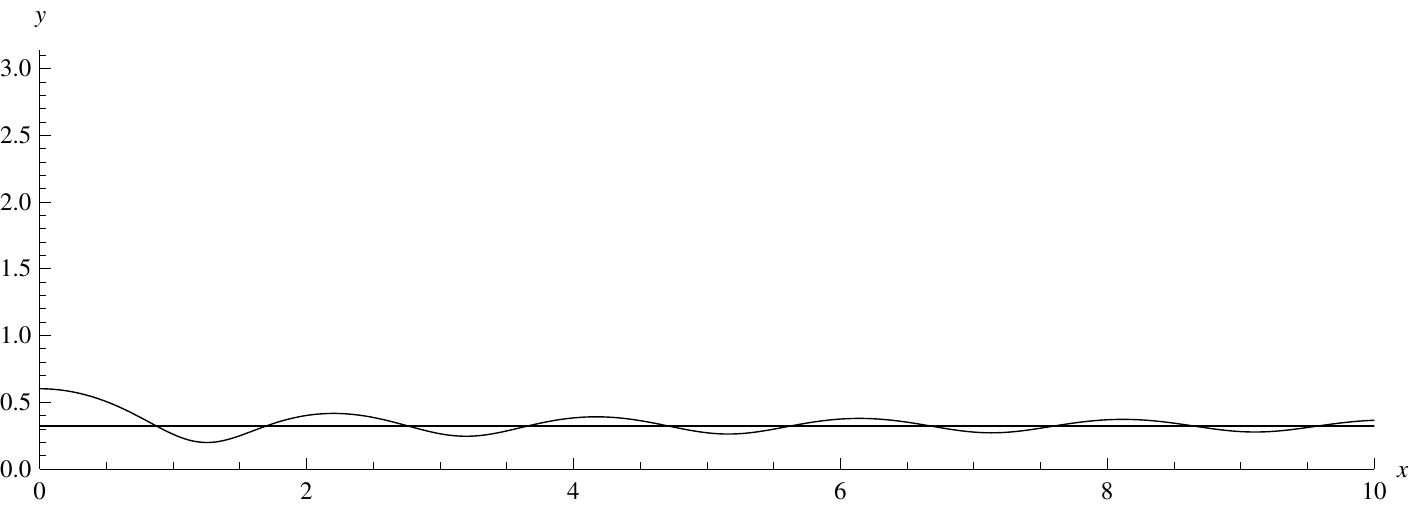}}
  \subfigure[$ (x_0,y_0,\sigma_0)=(0,0.98,0).$]{
  	 		 			  \includegraphics[scale=0.50]{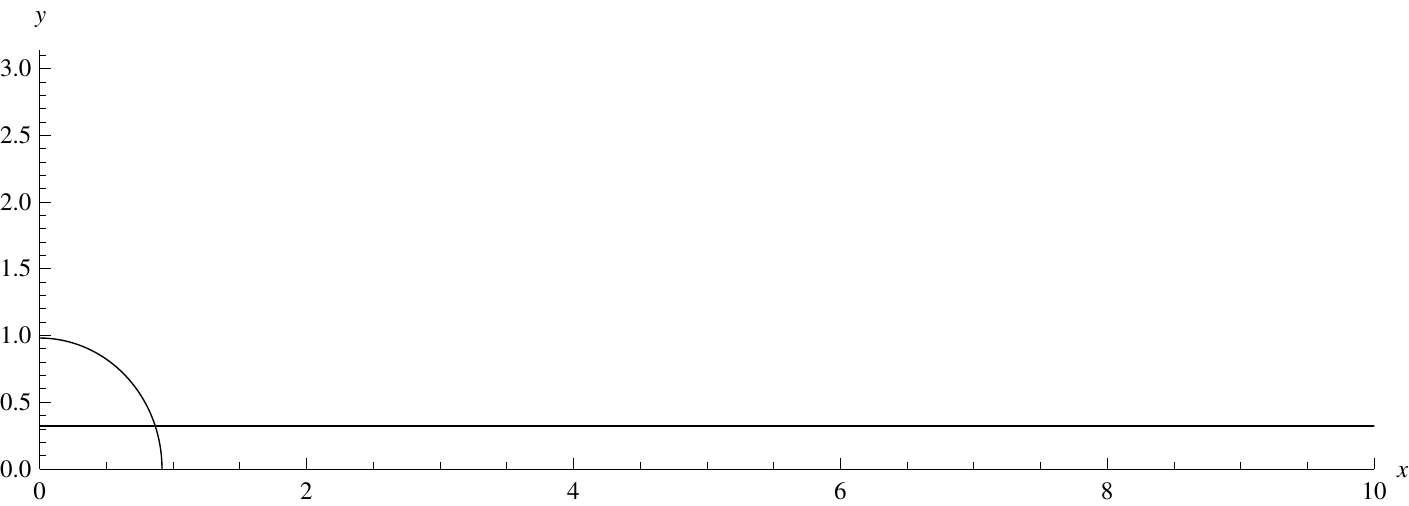}}
  \subfigure[$ (x_0,y_0,\sigma_0)=(0,1.3,0)$.]{
   	 		 			  \includegraphics[scale=0.50]{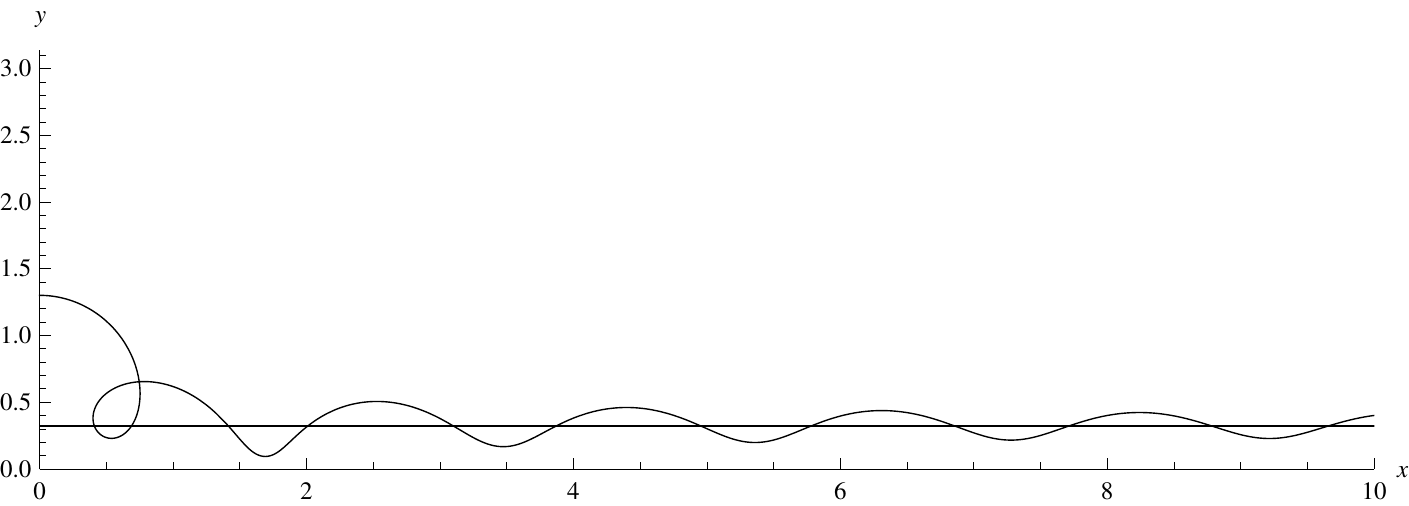}}      \subfigure[$(x_0,y_0,\sigma_0)=(0,1.68,0)$.]{		
      	 		 			  \includegraphics[scale=0.50]{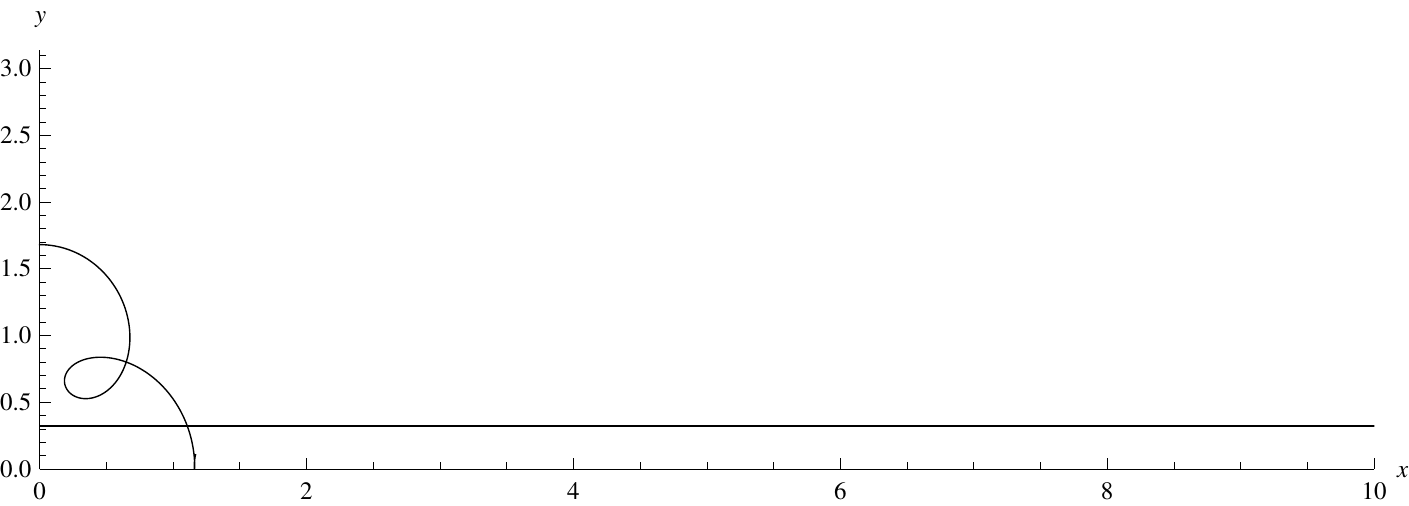}}
    \subfigure[$ (x_0,y_0,\sigma_0)=(0,2,0).$]{
    	 		 			  \includegraphics[scale=0.50]{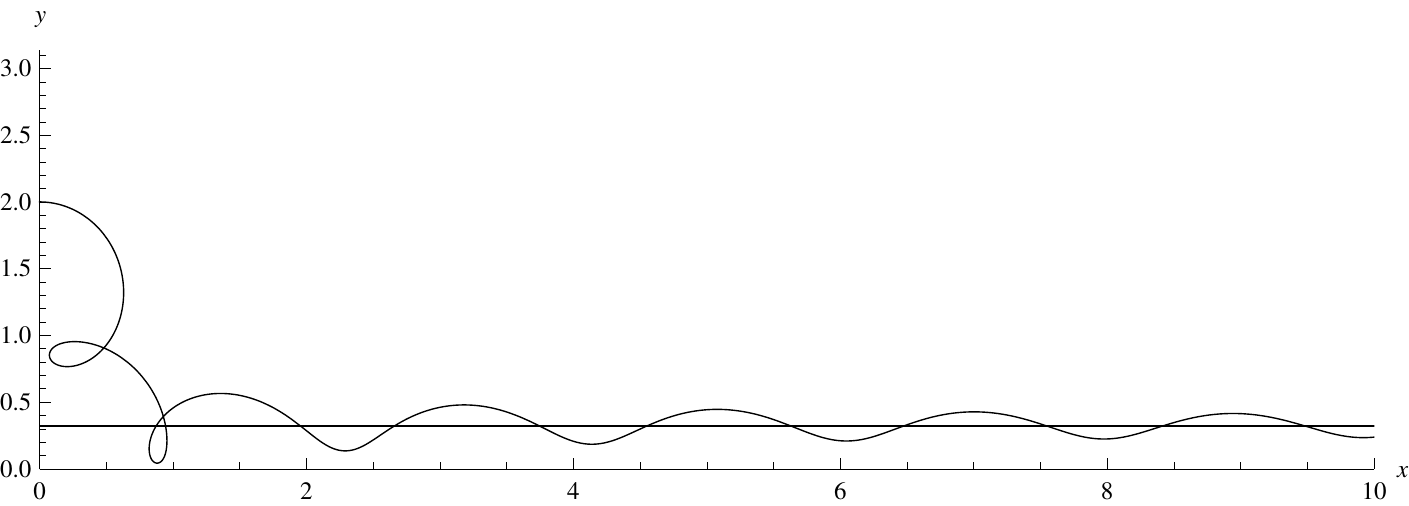}}
    \subfigure[$ (x_0,y_0,\sigma_0)\approx(0,2.24,0)$.]{
     	 		 			  \includegraphics[scale=0.50]{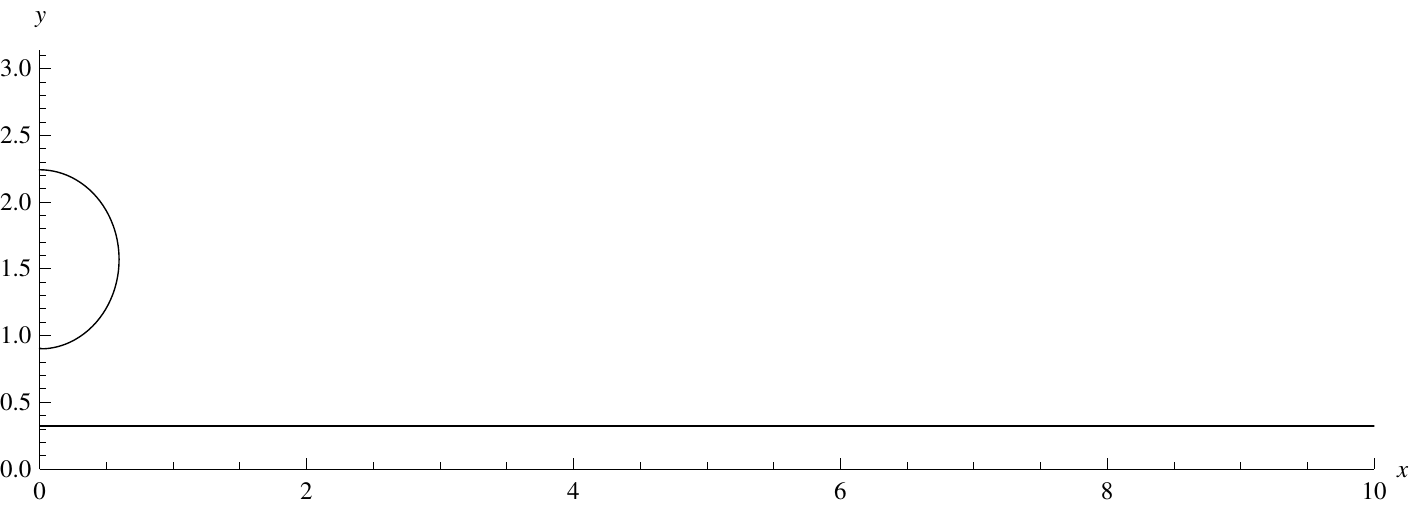}} 
     	 		 			     \subfigure[$(x_0,y_0,\sigma_0)=(0,2.8,0)$.]{		
     	 		 			   \includegraphics[scale=0.50]{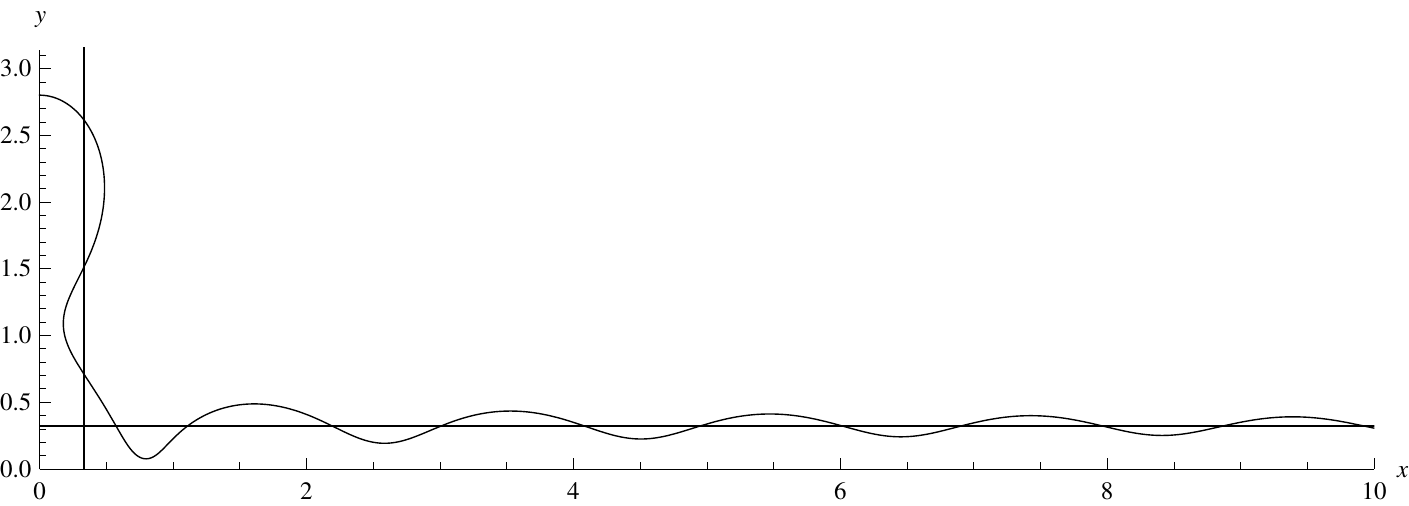}}

  \caption{Generating curves of minimal hypersurfaces in $\S^2 \times \re^2$, with  initial conditions $(x_0,y_0,\sigma_0)$ and constant mean curvature $h=3$.   
  }
  \label{fig:3}
  \end{figure}

Finally we will discuss the stability of the noncompact constant mean curvature  hypersurfaces discussed  in the previous theorems. We will prove: 

\begin{Theorem}
All minimal hypersurfaces in  $\re^n \times {\bf S}^m$ invariant by the action of $O(n) \times O(m)$ and all the
noncompact invariant constant mean curvature hypersurfaces constructed in Theorem 1.2 are unstable. 

\end{Theorem}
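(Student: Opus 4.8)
The plan is to compute the Jacobi operator of each hypersurface and show that its bottom of spectrum is negative already on $O(n)\times O(m)$-invariant normal variations. Write $\Sigma$ for one of the hypersurfaces, with generating curve $\varphi=(x(s),y(s))$ and unit normal $\mathbf n=(\sin\sigma,-\cos\sigma)$, and let $L=\Delta_\Sigma+q$ be the Jacobi operator, $q=|A|^2+\mathrm{Ric}(\mathbf n,\mathbf n)$. Since the ambient Ricci curvature is $(m-1)$ times the squared length of the $\S^m$-component of a vector, $\mathrm{Ric}(\mathbf n,\mathbf n)=(m-1)\cos^2\sigma$; and the principal curvatures are $-\sigma'$ along the curve, $\cot(y)\cos\sigma$ with multiplicity $m-1$, and $-\sin(\sigma)/x$ with multiplicity $n-1$, whence
\[
q=(\sigma')^2+(m-1)\frac{\cos^2\sigma}{\sin^2 y}+(n-1)\frac{\sin^2\sigma}{x^2}>0 .
\]
Restricting the second variation to invariant functions $f=g(s)$ and integrating out the orbit $\S^{n-1}\times\S^{m-1}$ reduces instability to producing a compactly supported $g$ with $Q_0(g)<0$, where
\[
Q_0(g)=\int\big[(g')^2-q\,g^2\big]J\,ds,\qquad J=x^{n-1}\sin^{m-1}(y),
\]
is the form of the weighted operator $L_0=\Delta_s+q$, with $\Delta_s g=J^{-1}(Jg')'$.

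The mechanism I would exploit is a bounded Jacobi field coming from an ambient symmetry. The rotation $V$ of $\S^m$ in the plane spanned by the pole axis and one equatorial direction is a Killing field; it preserves the mean curvature, so $v=\langle V,\mathbf n\rangle$ satisfies $Lv=0$. A direct computation gives $v=\cos(\sigma)\,\eta_1$, where $\eta_1$ is a degree-one spherical harmonic on the orbit $\S^{m-1}$. Hence $\cos\sigma$ lies in the kernel of the first-harmonic operator $L_1=\Delta_s+q-\tfrac{m-1}{\sin^2 y}$, and $L_0=L_1+\tfrac{m-1}{\sin^2 y}$. On any arc where $\cos\sigma>0$, the ground-state substitution $g=\cos(\sigma)\,\psi$ gives $\int\big[(g')^2-(q-\tfrac{m-1}{\sin^2 y})g^2\big]J\,ds=\int\cos^2\sigma\,(\psi')^2 J\,ds$, so that
\[
Q_0(\cos\sigma\,\psi)=\int(\psi')^2\,d\mu-\int\frac{m-1}{\sin^2 y}\,\psi^2\,d\mu,\qquad d\mu=\cos^2(\sigma)\,J\,ds .
\]
Because $\tfrac{m-1}{\sin^2 y}\ge m-1>0$, it is enough to find a compactly supported $\psi$ on an end with $\int(\psi')^2\,d\mu<(m-1)\int\psi^2\,d\mu$; since the measure $\mu$ has infinite mass and only polynomial growth, a logarithmic cutoff drives the weighted Rayleigh quotient to $0<m-1$, giving $Q_0(\cos\sigma\,\psi)<0$.

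For the minimal hypersurfaces there is no side condition, so this already proves instability. For the noncompact constant mean curvature hypersurfaces of Theorem 1.2 the same $q$ and the same field $v$ arise, because Killing fields preserve $h$, so the displayed identity is unchanged; the only new feature is that admissible variations must be volume preserving, $\int_\Sigma f=0$. I would remove this constraint by the standard two-bump device: choose $\psi_1,\psi_2$ with disjoint supports in the end, each giving $Q_0(\cos\sigma\,\psi_i)<0$, and pick $(a_1,a_2)\ne(0,0)$ with $\int_\Sigma(a_1\cos\sigma\,\psi_1+a_2\cos\sigma\,\psi_2)=0$; disjointness forces $Q_0(a_1\cos\sigma\,\psi_1+a_2\cos\sigma\,\psi_2)=a_1^2Q_0(\cos\sigma\,\psi_1)+a_2^2Q_0(\cos\sigma\,\psi_2)<0$, so these hypersurfaces are volume-preserving unstable as well.

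The step I expect to be the main obstacle is the input from the classification: I need, on at least one end, that $\cos\sigma$ stays positive and bounded away from $0$ (so the ground-state substitution applies and the potential does not degenerate) and that $d\mu$ has polynomial growth (so the cutoff error genuinely vanishes). Both should follow from the description of the generating curves in Theorems 1.1 and 1.2, since every end must run off to infinity in the $\re^n$ factor, where the curve becomes nearly horizontal, $\sigma\to0$ and $y$ bounded away from $0$ and $\pi$; there $\cos\sigma\to1$ and $J\sim x^{n-1}$. The delicate points are to confirm this asymptotic behavior for each family, including the self-intersecting immersions of $\S^{n-1}\times\S^{m-1}\times\re$, and to make the cutoff estimate uniform; if $\cos\sigma$ failed to be sign-definite on some end I would localize on the maximal sub-arc where $\cos\sigma>0$, which the asymptotics guarantee is nonempty and of infinite $\mu$-mass.
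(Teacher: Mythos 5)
Your key identity is correct: the rotation of ${\bf S}^m$ through the pole axis is an ambient Killing field whose normal component is $\cos (\sigma )\,\eta_1$, so $\cos \sigma$ does lie in the kernel of the first--harmonic operator $L_1=\Delta_s+q-\frac{m-1}{\sin^2 y}$, and the ground-state substitution is legitimate on any arc where $\cos\sigma>0$. The genuine gap is the step you yourself flag as the main obstacle, and it does not close: your cutoff argument needs the weighted measure $d\mu=\cos^2(\sigma)\,x^{n-1}\sin^{m-1}(y)\,ds$ on an end to have infinite mass with controlled (doubling/polynomial) growth, i.e.\ essentially that $\cos\sigma$ stays bounded away from $0$ on most of the end. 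Nothing in Theorem 1.1 or Theorem 1.2 gives this: the classification only guarantees that the end is a graph (so $\cos\sigma>0$) oscillating with extrema bounded away from $0$ and $\pi$; it does \emph{not} assert $\sigma\to 0$, nor that $p'$ is bounded, and you do not prove either. (These statements are plausibly true, by a damped-oscillation analysis, but that analysis appears nowhere in the paper and is not trivial.) A second, smaller defect: with $d\mu\sim x^{n-1}dx$ a logarithmic cutoff does \emph{not} drive the Rayleigh quotient to $0$ once $n\geq 5$, since the energy $\frac{1}{\log^2 R}\int_R^{R^2}x^{n-3}\,dx$ then dominates the mass $\sim R^n$; one must use linear cutoffs over dyadic annuli, which work under a doubling hypothesis --- but that hypothesis is again exactly the unproven asymptotic control.

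The paper's proof sidesteps all of this by using the ``other'' invariant function: $u=\sin\sigma$, the counterpart of your construction built from the translations of $\re^n$ rather than the rotations of ${\bf S}^m$. It satisfies $uLu=(n-1)\frac{y'^2}{x^2}=(n-1)\frac{\sin^2\sigma}{x^2}\geq 0$, so formally $Q_\Sigma(u)=-\int_\Sigma uLu\,d\Sigma<0$; and, crucially, $\sin\sigma$ \emph{vanishes exactly at the extrema} of the generating curve. Hence truncating $u$ between two consecutive extrema $x_1<x_2$ already yields an admissible compactly supported test function with negative index form --- no cutoff, no growth estimate, no asymptotics: the only input needed from Theorems 1.1 and 1.2 is the existence of finitely many consecutive extrema, which is precisely what they provide. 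The volume constraint is then removed by the same two-bump device you propose (truncations $u_1,u_2$ on consecutive oscillation intervals and a combination $C_1u_1+C_2u_2$ with zero mean). So your route is genuinely different and could be salvaged if the asymptotic behavior of the generating curves ($\cos\sigma$ bounded below, polynomial growth of $\mu$) were established; as written, it trades the paper's soft truncation step for a quantitative decay statement that remains unproven.
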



\section{$O(n) \times O(m)$-invariant  hypersurfaces}

We consider a hypersurface $M^{m+n-1} \subset  \re^n  \times  {\bf S}^m$ invariant by the canonical action of
$O(n) \times O(m)$ (fixing the south pole  in ${\bf S}^m$ and the origin in $\re^n$). The orbit
space of this action is identified with $  [0,\infty ) \times [0, \pi ]$.  $M$ is identified with a
curve $\varphi$  in $[0,\infty ) \times [0, \pi]$. Parametrizing this curve by arc length (and picking an orientation)
and writing $\varphi (s) = (x(s),y(s))$, then $x'(s) = \cos \sigma (s)$, $y' (s) = \sin \sigma (s)$ where 
$\sigma (s)$ is the angle formed by the (oriented) curve and the $x$-axis at  $\varphi (s) $. 

The mean curvature $h$ of $M$ is of course invariant by the action of
$O(n) \times O(m)$ and so it can be expressed as a function of the parameter $s$. It is given  in terms of the curve $\varphi$
and with respect to the normal vector ${\bf n}=(\sin \sigma (s) , -\cos \sigma (s))$, by

\begin{equation} h(s)= (m-1) \ \frac{\cos (y(s))}{\sin (y(s))} \cos (\sigma (s)) - (n-1) \ \frac{\sin \sigma (s) }{x(s) } - \sigma '(s) .
\end{equation}

Of course if one changes the orientation of $\varphi$ considering the curve 
$\overline{\varphi} (s) = \varphi (a-s) $ for some $a\in \re$ then $\overline{\sigma} (s) = \sigma (a-s) + \pi$, and the
mean curvature changes sign (changing the orientation of $\varphi$ amounts to changing the unit normal vector to $M$).

The curve $\varphi$ determines a smooth complete embedded hypersurface if and only if it does not intersect
itself, it  is closed and it is orthogonal to the boundary of $ [0, \infty ) \times [0,\pi ] $ at points of intersection.

Except at the points where $x'(s) =0$ we can also express $M$ by a function $p$ defined in some subset of
$[0,\infty )$ with values in $[0,\pi ]$ by the relation $\varphi (s) =(t,p(t))$. In terms of the function $p$, $M$ has mean curvature $h$ if

\begin{equation}p''(x)= (1+p'(x)^2 ) \left( (m-1) \  \frac{\cos p(x)}{\sin p(x)}- (n-1) \ \frac{p'(x)}{x}  -h \sqrt{1+ p'(x)^2} \right) .
\end{equation}

Similarly at points where $y'(s) \neq 0$ one can express $M$ by a function $f$ defined on some subset of $(0, \pi )$ with
values in $[0, \infty )$ by the relation $\varphi (s) = (f(t), t)$. Of course $f$ is the inverse function of $p$.  In terms of the function $p$, $M$ has mean curvature $h$ if

\begin{equation} f''(X) = (1+(f'(x)^2) \left( (m-1) \ \frac{\cos (x)}{\sin (x)} f'(x)  - (n-1) \ \frac{1}{f(x)}  -h \sqrt{1+ h'(x)^2} \right) .
\end{equation}

Note that in (2) we are parmetrizing $\varphi$ so that $x'(s) >0$ while in (3) it is parametrized so that
$y'(s) >0$.

\vspace{.5cm}

We assume from now on that $h$ is a constant and
we will try to find invariant hypersurfaces of mean curvature $h$ by studying the solutions of equation (2).

 Let $x_h \in (0,\pi )$ be the only value such that $h=(m-1) \cot (x_h )$. We have the constant solution $p=x_h$.
The first elementary observation about equation (2) is that if at some $x$ one has $p'(x) =0$ then $x$ is a local
minimum of $p$  if $p(x) <x_h$, $x$ is a local maximum  of $p$ if $p(x)> x_h$ and of course $p$ is the constant solution if
$p(x) = x_h$.

We will need the following completely elementary observation:

\begin{Lemma}
Let $j:[a,b]\rightarrow [m,M]$, $k:[c,d] \rightarrow [m,M]$ be two increasing $C^1$-functions with the same image. 
Denote by $I=k^{-1} \circ j :[a,b] \rightarrow [c,d]$.  Let $x_0 \in (a,b)$ be such that $j'(x_0 ) = k' (I(x_0))$.

If $j'(x_0 ) >0$ we have: 

(a) If $j''(x_0 ) > k'' (I(x_0 ))$ then there exists $\varepsilon >0$ such that $j'(x) < k'(I(x))$ for $x\in (x_0 -\varepsilon ,x_0 )$
and $j'(x) > k'(I(x)) $ for $x\in (x_0 ,x_0 +\varepsilon )$ 


If $j'(x_0 ) =0$ but for $|x - x_0 |>0 $ $j'(x) >0 , k'(I(x)) >0$ we have:

(b) If $j''(x_0 ) = k''(I(x_0 ))<0$ and $j'''(x_0) < k'''(I(x_0 ))$ then  there exists $\varepsilon >0$ such
 that $j'(x) < k'(I(x))$ for $x\in (x_0 -\varepsilon ,x_0 )$.

(c) If $j''(x_0 ) = k''(I(x_0 ))>0$ and $j'''(x_0) > k'''(I(x_0 ))$ then  there exists $\varepsilon >0$ such
 that $j'(x) > k'(I(x))$ for $x\in (x_0 ,x_0 + \varepsilon )$.

\end{Lemma}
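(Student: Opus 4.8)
The plan is to reduce all three parts to the behaviour of a single comparison function near $x_0$. Set $g(x) := j'(x) - k'(I(x))$, so that the hypothesis $j'(x_0) = k'(I(x_0))$ reads $g(x_0)=0$ and the three conclusions are statements about the sign of $g$ on one or both sides of $x_0$. The key algebraic simplification comes from differentiating the defining relation $j = k\circ I$, which gives $j'(x) = k'(I(x))\,I'(x)$ and hence
\[ g(x) = k'(I(x))\,\bigl(I'(x)-1\bigr). \]
Since $k'(I(x)) > 0$ for $x \neq x_0$ in every case, the sign of $g(x)$ is the sign of $I'(x)-1$, and the whole lemma becomes a question of whether $I'$ lies below or above $1$ to the left or right of $x_0$.

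For part (a), the identity $j'(x_0) = k'(I(x_0))\,I'(x_0)$ together with $j'(x_0) = k'(I(x_0)) > 0$ forces $I'(x_0) = 1$. As $k'(I(x_0))\neq 0$, the map $I = k^{-1}\circ j$ is as smooth as $j$ and $k$ near $x_0$, so I may differentiate $g$ directly: $g'(x_0) = j''(x_0) - k''(I(x_0))\,I'(x_0) = j''(x_0) - k''(I(x_0))$. The hypothesis $j''(x_0) > k''(I(x_0))$ gives $g'(x_0)>0$, so $g$ increases strictly through its zero at $x_0$, which is exactly the conclusion that $g<0$ immediately to the left and $g>0$ immediately to the right.

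The crux is parts (b) and (c), where $j'(x_0)=k'(I(x_0))=0$ and the formula for $I'$ degenerates into a $0/0$ indeterminacy, so direct differentiation of $g$ is unavailable. First I would record that, because $j'$ and $u\mapsto k'(u)$ attain interior local minima equal to $0$ at $x_0$ and $I(x_0)$ while remaining positive on both sides, their first derivatives there vanish: $j''(x_0)=k''(I(x_0))=0$, and in the nondegenerate case the vanishing is of exact order two, whence $j'''(x_0),\,k'''(I(x_0))>0$. This is why the governing comparison is genuinely of third order. Expanding $j(x)-j(x_0) = \tfrac{1}{6}j'''(x_0)(x-x_0)^3 + o((x-x_0)^3)$ and the analogous expansion of $k$ about $I(x_0)$, and equating them through $j = k\circ I$ (using $j(x_0)=k(I(x_0))$), I obtain
\[ I'(x_0) = \lim_{x\to x_0}\frac{I(x)-I(x_0)}{x-x_0} = \left(\frac{j'''(x_0)}{k'''(I(x_0))}\right)^{1/3}. \]
A parallel Taylor computation applied to $I'(x) = j'(x)/k'(I(x))$ shows this limit also equals $\lim_{x\to x_0}I'(x)$, so $I'$ is continuous at $x_0$.

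With the cube-root formula in hand the two parts are immediate: in case (b), $j'''(x_0) < k'''(I(x_0))$ makes $I'(x_0)<1$, so by continuity $I'(x)<1$, i.e. $g(x)<0$, on a punctured neighbourhood of $x_0$, in particular on $(x_0-\varepsilon,x_0)$; in case (c), the reversed third-derivative inequality gives $I'(x_0)>1$ and hence $g(x)>0$ on $(x_0,x_0+\varepsilon)$. I expect the main obstacle to be exactly this degenerate step of assigning a value to $I'(x_0)$ when $j'$ and $k'\circ I$ vanish simultaneously: establishing the cube-root scaling and verifying that $I$ remains $C^1$ at $x_0$ — despite $k^{-1}$ failing to be differentiable at $j(x_0)$ — is where the real work lies, whereas part (a) is a one-line consequence of $g'(x_0)>0$.
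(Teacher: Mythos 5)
Your part (a) is correct and coincides with the paper's argument: from $k'(I(x_0))I'(x_0)=j'(x_0)$ one gets $I'(x_0)=1$, and the second-order hypothesis forces the sign change of $j'-k'\circ I$ through $x_0$ (the paper phrases this as $I''(x_0)>0$, you phrase it as $g'(x_0)>0$; these are the same computation).

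For parts (b) and (c), however, there is a genuine gap: you do not prove the statement, you prove a different one. You open by asserting that the hypotheses force $j''(x_0)=k''(I(x_0))=0$, and you then build the entire argument (the cube-root formula $I'(x_0)=(j'''(x_0)/k'''(I(x_0)))^{1/3}$) on that assumption. But cases (b) and (c) explicitly hypothesize $j''(x_0)=k''(I(x_0))<0$, respectively $>0$, and these nonzero sign conditions are precisely what the paper's proof uses: differentiating $k\circ I=j$ twice at $x_0$ (where $k'(I(x_0))=j'(x_0)=0$) gives $k''(I(x_0))\,I'(x_0)^2=j''(x_0)=k''(I(x_0))$, hence $I'(x_0)=1$ --- a step that is impossible when $k''(I(x_0))=0$; differentiating a third time gives $3k''(I(x_0))\,I''(x_0)=j'''(x_0)-k'''(I(x_0))$, and in both (b) and (c) numerator and denominator have the same sign, so $I''(x_0)>0$ and the conclusion follows as in (a). The tension you detected (two-sided positivity of $j'$ at an interior zero forces $j''(x_0)=0$) is real, but the correct response is that the positivity hypothesis must be read one-sidedly: in the paper's applications (Lemmas 2.2 and 2.3), $x_0$ is a common local minimum (case (c)) or local maximum (case (b)) of the two functions being compared, i.e.\ an endpoint of the interval on which both are increasing, and the conclusion is asserted only on that side; there $j''(x_0)=k''(I(x_0))$ equals $(m-1)\cos(e)/\sin(e)-h>0$ or $(m-1)\cos(E)/\sin(E)-h<0$, both nonzero, so your degenerate configuration $j''=k''=0$, $j''',k'''>0$ never occurs. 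Under your literal two-sided reading the hypotheses of (b) and (c) are unsatisfiable and the claims are vacuously true, so nothing would need proving; under the intended reading your cube-root analysis simply does not apply. Either way, your argument would not justify the comparisons $f'(x)>p'(x_p)$ and $f'(x)<p'(x_p)$ that Lemmas 2.2 and 2.3 extract from this lemma. (One fair point in your favor: the differentiability of $I=k^{-1}\circ j$ at $x_0$, where $k'$ vanishes and the inverse function theorem fails, is indeed a subtlety, and the paper glosses over it; but it has to be resolved in the nondegenerate one-sided setting $j''=k''\neq 0$, not in the $j''=0$ setting you substituted.)
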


\begin{proof} We have $k\circ I = j$ and so

$$k'(I(x)) I'(x) = j'(x)$$

$$k''(I(x) I'(x)^2 + k'(I(x)) I'' (x) = j''(x)$$

$$k'''(I(x)) I'(x)^3 +3k''(I(x)) I'(x) I''(x) + k'(I(x) I'''(x) = j'''(x)$$

When $j'(x_0 ) >0$ it follows from the first equation that $I'(x_0 )=1$. 
In case (a) we have that $I''(x_0 ) >0$. It follows that there exists $\varepsilon >0$ such
that $I'(x) <1$ for $x\in (x_0 -\varepsilon ,x_0 )$ and $I'(x) >1$ for $x\in (x_0 ,x_0 +\varepsilon )$  and (a) follows. 


In case (b) or (c) it follows from the second equation  that $I' (x_0 ) =1$. And then  it follows from the third 
equation that $I'' (x_0 ) >0$. Then (b) and (c) follow as in the case (a).

\end{proof}




The following two lemmas will be the main tool to prove Theorem 1.1 and Theorem 1.2 in the next sections.

\begin{Lemma} Let $p$ be a solution of (2) such that there are  points $x_0  < x_1$ where $x_0$ is a  local maximum
of $p$ and $x_1$ a local minimum, $p$ is strictly decreasing in $[x_0 , x_1 ]$, $p(x_0 )=E, p(x_1 )=e$. 
Then $p$ has a local maximum at a point $x_2 > x_1$ ($p$
strictly increasing in $(x_1 , x_2 )$) and $p(x_2 ) <E$.
\end{Lemma}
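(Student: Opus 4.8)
The plan is to exploit the ``damped oscillator'' structure of equation (2), in which the term $-(n-1)p'/x$ plays the role of friction. First I record the endpoint constraints: by the elementary characterization of critical points stated before Lemma 2.1 (a zero of $p'$ at height below $x_h$ is a minimum, above $x_h$ a maximum), the hypotheses force $e = p(x_1) < x_h < p(x_0) = E$. To produce the local maximum $x_2$, note that since $x_1$ is a local minimum, $p$ is strictly increasing just to the right of $x_1$. While $0 < p < x_h$ a vanishing slope would force a minimum, so $p'$ cannot return to $0$ before $p$ crosses $x_h$; and if instead $p' \to 0$ with $p(x) \to L < x_h$, then (2) gives $p'' \to (m-1)(\cot L - \cot x_h) > 0$, a contradiction, so $p$ must genuinely cross $x_h$. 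Once $p > x_h$ with $p' > 0$, the bracket in (2) is strictly negative, since $(m-1)\cot p < (m-1)\cot x_h = h$ while the remaining two terms are nonpositive (using $h \ge 0$); hence $p'' < 0$, so $p'$ decreases and cannot remain positive forever, for otherwise $p \to \pi^-$ and $\cot p \to -\infty$ would drive $p'' \to -\infty$. Thus $p'$ vanishes at a first $x_2 > x_1$ with $p(x_2) > x_h$, a local maximum, and $p$ is strictly increasing on $(x_1, x_2)$.

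For the amplitude estimate $p(x_2) < E$, I reflect the decreasing branch about the vertical line $x = x_1$: set $\hat p(x) = p(2x_1 - x)$ on $[x_1, 2x_1 - x_0]$, which is strictly increasing from $e$ to $E$ with $\hat p'(x_1) = 0$ and $\hat p'(2x_1 - x_0) = 0$. A direct substitution shows $\hat p$ solves equation (2) with the friction term $-(n-1)p'/x$ replaced by the \emph{anti}-friction term $+(n-1)\hat p'/(2x_1 - x)$. I will compare the genuine increasing branch $p|_{[x_1,x_2]}$ with $\hat p$ through the magnitude of the slope regarded as a function of the height $y$. Both slope magnitudes vanish at $y = e$, and by (2) they share the same turning-point curvature $p''(x_1) = (m-1)\cot e - h$. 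Applying Lemma 2.1 to the two increasing branches viewed as graphs over $x$ with common image starting at $e$ (so that $I = k^{-1}\circ j$ matches equal heights), with $j' = k' = 0$ and $j'' = k''$ at $x_1$, reduces the infinitesimal comparison to the third derivatives; the only discrepancy is the sign of the friction term, and case (b)/(c) of that lemma yields that the genuine branch is strictly \emph{less steep} than $\hat p$ at each common height just above $e$.

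It then remains to propagate this inequality over the whole range $y \in (e, E)$, i.e. to show the two slope-versus-height curves never meet again. If they agreed at a common height $y_* \in (e,E)$ with equal positive slope $v$, then evaluating (2) on each branch there, the terms $(m-1)\cot y_*$ and $h\sqrt{1+v^2}$ coincide, so the difference of curvatures is the sum of the two friction magnitudes $(1+v^2)(n-1)v(1/x + 1/\xi) > 0$, whence $j'' < k''$ at $y_*$; Lemma 2.1(a) then shows the slopes cross transversally, contradicting the ordering just below $y_*$. Hence the genuine branch stays strictly less steep at every common height. Since $\hat p$ attains slope $0$ exactly at height $E$, the slower branch must attain slope $0$, its maximum, at a height strictly below $E$; that maximum is $p(x_2)$, so $p(x_2) < E$. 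I expect the main obstacle to be precisely this global no-recrossing step: converting the infinitesimal, Lemma 2.1--based comparison at the degenerate turning point into one valid all the way up to height $E$, while correctly handling the two endpoints (the shared degenerate turning point at $y = e$ and the vanishing of $\hat p'$ at $y = E$).
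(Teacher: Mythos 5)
Your strategy is the same as the paper's: reflect the decreasing branch about $x=x_1$ to get $\hat p(x)=p(2x_1-x)$ solving the ``anti-friction'' equation, compare the slopes of $p$ and $\hat p$ at equal heights, start the comparison at the shared degenerate turning point at height $e$ via the third-derivative cases of Lemma 2.1, and rule out any later crossing of the slope profiles via Lemma 2.1(a) (your preliminary existence argument for a first local maximum $x_2$ is in fact more explicit than what the paper writes). However, the final step contains a genuine gap, and it is exactly the point you yourself flag as ``the main obstacle'': from the strict inequality $p'<\hat p'$ at every common height $y\in(e,E)$ you conclude that $p$ must reach zero slope at a height strictly below $E$. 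That implication is false as stated. The strict pointwise comparison below $E$ is perfectly compatible with the borderline scenario in which $p$ keeps increasing and attains height exactly $E$ with $p'\to 0$ there. Indeed, near the top both slope-versus-height profiles vanish to the same leading order: writing $v=v(y)$ for the slope as a function of height, each branch satisfies $v\,dv/dy \to (m-1)\cot E - h<0$ as $y\to E$ (the friction terms vanish with $v$), so both profiles decay like $\sqrt{2\bigl(h-(m-1)\cot E\bigr)\,(E-y)}$; one can be strictly smaller than the other on all of $(e,E)$ and still vanish only at $y=E$. In that scenario $p$ would have a local maximum with $p(x_2)=E$, so your argument only yields $p(x_2)\le E$, not the strict inequality claimed in the Lemma.

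The paper closes precisely this case with the one application of Lemma 2.1 that your proposal omits: an analysis at the \emph{top} endpoint. Assuming $p$ increases until reaching the value $E$ at a point $z$, the no-crossing comparison forces $p'(z)=0$, so both branches have degenerate critical points at the same height $E$ with equal second derivatives, $p''(z)=\hat p''(2x_1-x_0)=(m-1)\cot E - h<0$. Differentiating the two equations once more gives $\hat p'''(2x_1-x_0)-p'''(z)=p''(z)\left(\frac{1}{2x_1-x_0}+\frac{1}{z}\right)<0$, and Lemma 2.1(b) then produces common heights just below $E$ at which $\hat p'<p'$, contradicting the ordering you established from the bottom. This next-order argument at $y=E$ (or some equivalent) is what your proof is missing; without it the strict conclusion $p(x_2)<E$ is not established.
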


\begin{proof}

Let $f(x)= p (2 x_1 -x)$. Then $f'(x) = -p ' (2x_1 -x)$, $f''(x) = p '' (2x_1 -x ) $. It
follows that $f$ verifies

\begin{equation}f''(x)= (1+f'(x)^2 ) \left( (m-1) \ \frac{\cos f(x)}{\sin f(x)}  + (n-1) \ \frac{f'(x)}{2x_1 - x}  -h \sqrt{1+ f'(x)^2} \right)
\end{equation}

\noindent
with $f(x_1 )=e$ and $f' (x_1 )=0$. Also $f'' (x_1 ) = p '' (x_1 ) = (m-1) \cos (e)/ \sin (e) -h >0$. But it is easy to check that
$f''' (x_1 ) - p ''' (x_1 )=2(n-1) f''(x_1) /x_1 >0$. Since both $f$ and $p$ are strictly increasing
after $x_1$ with $f' > p'$ at least close to $x_1$ we have that for any $x> x_1$, $x$ close to $x_1$ there exists a  value $x_p$
close to $x_1$, $x_1 < x < x_p$ such that $f(x) = p(x_p )$. For these values one has that  $f'(x) > p'(x_p )$ by Lemma 2.1 (c).

We know that $f$ is increasing in the interval  $[x_1 , 2 x_1 - x_0 ]$ and $f(2x_1 -x_0 )= E$. Suppose $p$ also increases after
$x_1$ until reaching the value $E$ at some point $x_E$. 
Then for each $x\in (x_1 , 2x_1 - x_0 )$ there exists a unique $x_p \in ( x_1 , x_E )$ such that $f(x) = p (x_p )$. We have seen
that for $x$ close to $x_1$ $f'(x) > p' (x_p )$. Suppose that there exists a first value $x<2x_1 - x_0$ such that $f'(x) = p' (x_p )$.
Looking at equations (2) and (4)  
one has $f''(x) > p '' (x_p )$. But this would imply by Lemma 2.1 (a)  that for $y<x$, $y$ close to $x$ one would have $f'(y) < p' (y_p )$,
contradicting the assumption that $x$ was the first value where the equality holds. It follows that there exist a
first value $z> 2x_1 - x_0$
such that $p'(z)=0$, $p(z) = E$. Then $p''(z) = f''(2x_1 - x_0 ) = (m-1) \cos (E)/ \sin (E) -h <0$ and 
$f'''(2x_1 - x_0 ) - p'''(z) = p''(z) (1/(2x_1 - x_0 ) +1/z) <0$. Then by Lemma 2.1 (b) we would have that for 
some $x<2x_1 - x_0$, $f'(x) <p'(x_p )$ which we saw it cannot happen. It follows that $p$ must reach a local maximum
before reaching the value E, as claimed in the Lemma.

\end{proof}

Similarly one has 

\begin{Lemma}
 Let $p$ be a solution of (2) such that there are  points $x_0  >  x_1$ where $x_0$ is a  local maximum
of $p$ and $x_1$ a local minimum, $p(x_0 )=E, p(x_1 )=e$. Then $p$ has a local minimum at a point $x_2 > x_0$ ($p$
strictly decreasing in $(x_0 , x_2 )$) and $p(x_2 ) >e$.
\end{Lemma}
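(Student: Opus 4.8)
The plan is to mirror the proof of Lemma 2.2, reflecting this time about the maximum $x_0$ instead of the minimum. I set $g(x)=p(2x_0-x)$, so that $g'(x)=-p'(2x_0-x)$, $g''(x)=p''(2x_0-x)$ and $g'''(x)=-p'''(2x_0-x)$. Substituting $t=2x_0-x$ in (2) shows that $g$ solves the reflected equation
\[ g''(x)=(1+g'(x)^2)\left((m-1)\frac{\cos g(x)}{\sin g(x)}+(n-1)\frac{g'(x)}{2x_0-x}-h\sqrt{1+g'(x)^2}\right), \]
in which, exactly as in (4), the damping term has switched sign. At $x_0$ one has $g(x_0)=E$, $g'(x_0)=0$ and $g''(x_0)=p''(x_0)=(m-1)\cos(E)/\sin(E)-h<0$, because $x_0$ is a maximum and hence $E>x_h$. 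Using that at a critical point every solution of (2) satisfies $p'''=-(n-1)p''/t$, while the reflected equation gives $g'''=+(n-1)g''/(2x_0-x)$, I find $g'''(x_0)-p'''(x_0)=2(n-1)g''(x_0)/x_0<0$. Thus $g$ and $p$ leave the point $(x_0,E)$ with the same vanishing slope and the same negative second derivative, but with $g'''(x_0)<p'''(x_0)$, and both are strictly decreasing just to the right of $x_0$.

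The comparison then proceeds as in Lemma 2.2, with one bookkeeping change: since $g$ and $p$ are now decreasing, I would apply Lemma 2.1 to the increasing functions $-g$ and $-p$ and translate its conclusions back. Matching heights by $g(x)=p(x_p)$, the inequality $g'''(x_0)<p'''(x_0)$ is exactly the hypothesis of Lemma 2.1(c) for $-g,-p$, and yields $g'(x)<p'(x_p)$ for $x$ slightly larger than $x_0$; that is, $g$ descends strictly faster than $p$ at matched heights. To propagate this, suppose there were a first $x$ with $g'(x)=p'(x_p)$. Subtracting the reflected equation from (2) at the matched point, where the $\cot$ and the $h$ terms cancel, leaves
\[ g''(x)-p''(x_p)=(n-1)(1+g'(x)^2)\,g'(x)\left(\frac{1}{2x_0-x}+\frac{1}{x_p}\right)<0, \]
since $g'(x)<0$; this is the hypothesis of Lemma 2.1(a) for $-g,-p$, which forces $g'>p'$ just to the left of $x$, contradicting the minimality of $x$. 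Hence $g$ descends faster than $p$ on all of $(x_0,2x_0-x_1)$.

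It remains to rule out that $p$ ever reaches the value $e$. The reflected curve attains its minimum at $x_*=2x_0-x_1$, where $g(x_*)=e$ and $g'(x_*)=0$. If $p$ did not turn around above $e$, then, arguing as in Lemma 2.2, $p$ would possess a critical point $z>x_*$ with $p(z)=e$, $p'(z)=0$ and $p''(z)=(m-1)\cos(e)/\sin(e)-h>0$ (a minimum, since $e<x_h$). At this matched pair, using $2x_0-x_*=x_1$, I compute $g'''(x_*)-p'''(z)=(n-1)p''(z)(1/x_1+1/z)>0$, which is precisely the hypothesis of Lemma 2.1(b) for $-g,-p$; it gives $g'(x)>p'(x_p)$ for $x$ just below $x_*$, contradicting the previous paragraph. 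Therefore $p$ must attain a local minimum $x_2>x_0$, strictly decreasing on $(x_0,x_2)$, with $p(x_2)>e$, which is the assertion.

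The only genuinely new feature compared with Lemma 2.2 is that the relevant branches now lie after a maximum, so that every monotonicity and every inequality entering Lemma 2.1 is reversed; passing to $-g$ and $-p$ is the device that lets me reuse Lemma 2.1 verbatim, and I expect the careful tracking of these sign reversals, together with the tacit assumption that $x_0,x_1$ are consecutive extrema so that $p$ is strictly increasing on $[x_1,x_0]$, to be where all the care is needed. As a cleaner alternative one may avoid redoing the comparison entirely: the substitution $q=\pi-p$ sends a solution of (2) to a solution of the same equation with $h$ replaced by $-h$, fixes $x$, interchanges maxima and minima and sends $x_h$ to $\pi-x_h$, thereby carrying the present hypotheses into those of Lemma 2.2 for the constant $-h$. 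Since the proof of Lemma 2.2 is insensitive to the sign of $h$, applying it to $q$ and translating back gives the statement at once.
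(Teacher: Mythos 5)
Your proposal is correct and takes essentially the same route as the paper: the same reflection $g(x)=p(2x_0-x)$ solving the reflected equation (5), the same comparison at matched heights via Lemma 2.1(c), (a), (b) applied to $-g,-p$, and the same endpoint contradiction at $2x_0-x_1$; your explicit computations (the third-derivative identity at critical points, the matched-point formula for $g''-p''$, and $g'''(x_*)-p'''(z)=(n-1)p''(z)(1/x_1+1/z)$) are accurate and in fact clean up small typos in the paper's own write-up, such as its writing $2x_1-x_0$ where $2x_0-x_1$ is meant. Your closing alternative, reducing the statement to Lemma 2.2 via $q=\pi-p$ (which flips $h$ to $-h$ and exchanges maxima and minima, the proof of Lemma 2.2 being insensitive to the sign of $h$), is also sound and is a shortcut the paper does not use.
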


\begin{proof} Let $f(x)= p (2 x_0 -x)$. Then $f'(x) = -p ' (2x_0 -x)$, $f''(x) = p '' (2x_0 -x ) $. It
follows that $f$ verifies

\begin{equation}f''(x)= (1+f'(x)^2 ) \left( (m-1) \  \frac{\cos f(x)}{\sin f(x)}  + (n-1) \  \frac{f'(x)}{2x_0 - x}  -h \sqrt{1+ f'(x)^2} \right)
\end{equation}

\noindent
with $f(x_0 ) = p(x_0 )$, $f' (x_0 )=0$. Also $f'' (x_0 ) = p '' (x_0 ) = (m-1)  \cos (E)/ \sin (E) -h <0$.

But it is easy to check that
$f''' (x_0 ) - p ''' (x_0 )=2 (n-1)  f''(x_0) /x_0 <0$. Since both $f$ and $p$ are strictly decreasing
after $x_0$ with $f' < p'$ at least close to $x_0$ we have that for any $x> x_0$, $x$ close to $x_0$ there exists a  value $x_p$
close to $x_0$, $x_0 < x < x_p$ such that $f(x) = p(x_p )$. Applying Lemma 2.1 (c) to $-f$ and $-p$ we have that
for these values  $f'(x) < p'(x_p )$.

We know that $f$ is decreasing until $2 x_0 - x_1$ where it reaches its minimum value $e$. 
Suppose $p$ also decreases after
$x_0$ until reaching the value $e$ at some point $x_e$. Then for each $x\in (x_0 , 2x_0 - x_1 )$ there exists a unique $x_p \in ( x_0 x_e )$ such that $f(x) = p (x_p )$. We have seen
that for $x$ close to $x_1$ we have $f'(x) < p' (x_p )$. Suppose that there exists a first value $x\in (x_0 , 2x_0 - x_1 )$ such that $f'(x) = p' (x_p )$.
Looking at equations (2) and (5)  
one has $f''(x) < p '' (x_p )$. 
Applying Lemma 2.1 (a)  (to $-f$ and $-p$)
 this would imply that for $y<x$, $y$ close to $x$ one would have $f'(y) > p' (y_p )$,
contradicting the assumption that $x$ was the first value where the equality holds. It follows that there exist a
first value $z> x_0$
such that $p'(z)=0$, $p(z) =e$.  Then $p''(z) = f''(2x_0 - x_1 ) = (m-1)  \cos (e)/ \sin (e) -h >0$ and 
$f'''(2x_1 - x_0 ) - p'''(z) >0$. Then by Lemma 2.1 (b) we would have that for 
some $x<2x_1 - x_0$ $f'(x) >p'(x_p )$ which we saw it cannot happen. It follows that $p$ must reach a local minimum
before reaching the value e, as claimed in the Lemma.

\end{proof}

\section{Minimal hypersurfaces}

In this section we will prove Theorem 1.1.

We begin with  the following elementary observation which we will need later:

\begin{Lemma} Let $f:[a,b) \rightarrow [c,d]$ be a $C^2$ function, $a\geq 0$, $f'(x) >0 $ for all $x\in (a,b)$, $h(c,d) \rightarrow \re $ be $C^1$ function
such that $h' <0$, $\lim_{x\rightarrow d} h(x).(d-x) \leq -1$. Assume that 

\begin{equation}
f''(x) \leq  (1+f'(x)^2 ) \ h(f(x))
\end{equation}

\noindent
Then $\lim_{x\rightarrow b} f(x) <d$.

\end{Lemma}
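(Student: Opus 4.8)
The plan is to argue by contradiction, exploiting that the single quantity $\tfrac12\ln(1+(f')^2)$ converts the differential inequality into one that integrates cleanly against $du=f'\,dx$. First, since $f'>0$ the function $f$ is strictly increasing and bounded above by $d$, so $\lim_{x\to b}f(x)$ exists and is at most $d$; if it is strictly less than $d$ there is nothing to prove, so I assume toward a contradiction that $\lim_{x\to b}f(x)=d$.

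Next I would introduce $\Phi(x)=\tfrac12\ln(1+f'(x)^2)$. This is nonnegative because $1+f'(x)^2\ge 1$, and differentiating gives $\Phi'(x)=\frac{f'(x)f''(x)}{1+f'(x)^2}$. Since $f'(x)>0$, multiplying the hypothesis $f''(x)\le (1+f'(x)^2)\,h(f(x))$ by $\frac{f'(x)}{1+f'(x)^2}>0$ yields the key estimate $\Phi'(x)\le f'(x)\,h(f(x))$. Fixing any $x_1\in(a,b)$, integrating from $x_1$ to $x$, and substituting $u=f(t)$ (valid because $f$ is a strictly increasing $C^1$ map onto its image, and $f(t)\in(c,d)$ for $t\in(a,b)$) I obtain
$$\Phi(x)-\Phi(x_1)\le \int_{f(x_1)}^{f(x)} h(u)\,du .$$

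Now I would use the prescribed blow-up of $h$ at $d$. The condition $\lim_{u\to d}h(u)(d-u)\le -1$ forces $h(u)(d-u)\le -\tfrac12$, hence $h(u)\le -\tfrac{1}{2(d-u)}$, for $u$ sufficiently close to $d$; integrating this bound shows $\int^{d}h(u)\,du=-\infty$. Therefore, as $x\to b$ and $f(x)\to d$, the right-hand side of the displayed inequality tends to $-\infty$, while the left-hand side stays bounded below by the fixed constant $-\Phi(x_1)$ because $\Phi\ge 0$. This contradiction rules out $\lim_{x\to b}f(x)=d$ and establishes $\lim_{x\to b}f(x)<d$.

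I expect the substance of the argument to be the identity $\Phi'=f'f''/(1+(f')^2)$ together with its nonnegativity; the remaining work is bookkeeping that I do not anticipate to be an obstacle, namely checking that the change of variables is legitimate and that $f(x_1)$ lands in the open interval $(c,d)$ where $h$ lives. The one spot meriting genuine care is the passage from the one-sided limit hypothesis on $h$ (whose limiting value is allowed to be $-\infty$) to the honest divergence $\int^d h=-\infty$; here one only needs that a limit $\le -1$ keeps $h(u)$ below $-\tfrac{1}{2(d-u)}$ near $d$, which suffices for the integral to diverge.
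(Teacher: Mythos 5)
Your proof is correct, and it takes a genuinely different route from the paper's. You keep the full factor $1+(f')^2$ and exploit it: dividing the inequality by $1+(f')^2$ and multiplying by $f'>0$ turns the left side into the exact derivative of $\Phi=\tfrac12\ln\bigl(1+(f')^2\bigr)$, after which the change of variables $u=f(t)$ reduces everything to the divergence $\int^d h(u)\,du=-\infty$, which follows from $h(u)\le -\tfrac{1}{2(d-u)}$ near $d$; the contradiction with $\Phi\ge 0$ is then immediate. The paper instead \emph{discards} the factor $1+(f')^2$ (legitimate since $h<0$ and $1+(f')^2\ge 1$), obtaining $f''<-\tfrac{1}{2(d-f)}<-\tfrac{1}{2\varepsilon}$ past a point $x_0$ with $\varepsilon=d-f(x_0)$, $r=f'(x_0)$, and then runs a discrete stepping argument: over a step $\delta=\varepsilon/(2r)$, which is at most half the remaining distance to $b$ (this uses the contradiction hypothesis $f(x)\to d$ to ensure $r(b-x_0)\ge\varepsilon$), the derivative drops by at least $\tfrac{1}{4r}$ while staying below $r$; iterating, finitely many steps would force $f'<0$, a contradiction. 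Both proofs use only the eventual bound $h(u)(d-u)\le -\tfrac12$ and neither actually needs $h'<0$ or $a\ge 0$. What each buys: your integrating-factor argument is shorter, avoids the iteration bookkeeping, and makes transparent that the real mechanism is the non-integrability of $\tfrac{1}{d-u}$ at $u=d$ (so the constant $-1$ in the hypothesis could be relaxed to any negative constant, or indeed to any $h$ with $\int^d h=-\infty$); the paper's argument is more hands-on, using nothing beyond the mean value theorem, at the cost of a quantitative scheme whose constants ($\delta=\varepsilon/2r$, the drop $1/4r$) have to be tracked across infinitely many potential steps. One small point you handled correctly and should keep explicit: the limit hypothesis allows the value $-\infty$, and all you need from it is the eventual pointwise bound $h(u)\le-\tfrac{1}{2(d-u)}$.
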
 

\begin{proof}Assume that  $\lim_{x\rightarrow b} f(x) =d$. Let $x_0 \in (a,b)$ be such that  $h(f(x) ) (d-f(x ))< -1/2$  for all $x \in [x_0 ,d) $. Then
we have that $h(f(x)) < \frac{-1}{2(d-f(x))}$ for $x\in [x_0 , d)$.
Let $\varepsilon = d- f(x_0 ) >0$. Let $r=f'(x_ 0 ) >0$. Let $\delta = \frac{\varepsilon }{2r}$. For $x\in [x_0 , d)$
we have that $f'' (x)<0$ and so $f' (x) < r$. Since we assume that  $\lim_{x\rightarrow b} f(x) =d$ we must have that $r(b-x_0 )> \varepsilon$. Then
$\delta < (1/2) (b-x_0 )$. Moreover,
$f(x_0 + \delta ) < f(x_0 ) + r\delta = d-\varepsilon /2$. Also for $x>x_0$ we have
$f''(x)<\frac{-1}{2(d-f(x))} < \frac{-1}{2\varepsilon}$. And so $f'(x_0 + \delta ) < r -\frac{1}{4r}$. The step in which one goes (less than)  half the distance
between $x_0 $ and $b$ can be repited any number of times. But after doing it a finite times one would get that $f'$ becomes negative, 
contradicting the hypothesis. Therefore  $\lim_{x\rightarrow b} f(x) <d$ as claimed.

\end{proof}

{\it Proof of Theorem 1.1 }:

Assume that the curve $\varphi$ determines a complete immersed connected minimal hypersurface. We write $\varphi (s)=(x(s),y(s))$ and
denote by $\sigma (s)$ the angle function as in the previous section. Then:

\begin{equation} \sigma ' (s) = (m-1) \  \frac{\cos (y(s))}{\sin (y(s))} \cos (\sigma (s)) - (n-1) \  \frac{\sin \sigma (s) }{x(s) }  .
\end{equation}

The first observation is that if $\varphi (s) = (x(s),y(s))$ determines a  minimal hypersurface then so does
$\overline{\varphi} (s) = (x(s), \pi - y(s))$. 

Let ${\bf x_I} = \inf x(s)$. There are three distinct possibilities. 

P1. ${\bf x_I }=0$.

P2. ${\bf x_I }>0$ and there is a point $({\bf x_I} , y)$ in $\varphi$ with $y \neq 0, \pi $.

P3.  ${\bf x_I} >0$   and there is a point $({\bf x_I }, y)$ in $\varphi$ with $y= 0$ or  $y= \pi $.

Apriori P2 and P3 might not exclude each other but we will see that in fact they do.

Consider first the case P1. So we assume
that  the  curve $\varphi$ starts at the $y$-axis, i.e. it contains a point $(0,A)$ with $A\in [0, \pi ]$. By
the previous comments we can assume that $A\in [0,\pi /2 ]$. If $A=\pi /2$ we have the constant
solution $\varphi (s)=(s,  \pi/2 )$, which corresponds to $\sigma =0$ and $M= \re^n \times {\bf S}^{m-1}$. 

When $A=0$ the corresponding hypersurface is not smooth, it has a singularity at the point $V$ which corresponds
to $(0,0)$ in the orbit space (a punctured neighborhood of that point would be diffeomorphic to
$S^{m-1} \times S^{n-1} \times \re$). One can probably study such a singular minimal hypersurface as in
\cite[Theorem 4.1]{Alencar}, but we will not do it here.

Therefore we can assume $A\in (0,\pi /2)$. We then have a curve  $\varphi (s) = (x(s),y(s))$ with $\varphi (0)=
(0,A)$, $y'(0)=0, x'(0)=1$. It follows that $M$ can be described (close to this point at least) by a function $p$
satisfying

\begin{equation}p''(x)= (1+p'(x)^2 ) \left(  (m-1) \ \frac{\cos p(x)}{\sin p(x)}- (n-1) \ \frac{p'(x)}{x}  \right)
\end{equation}

\noindent
with initial conditions $p(0)=A$, $p'(0)=0$ (and therefore $p''(0)= (m-1) \frac{\cos (A) }{\sin (A)}>0$).

The proof of Theorem 1.1 is based in the following proposition:

\begin{Proposition} Let $p$ be a solution of (8), $z\geq 0$ and  $p(z)=A\in (0,\pi )$. Then $p$ is
defined for all $t\in [z,\infty )$, $p(t) \in (0,\pi )$, and $p$ oscillates around $\pi /2$.

\end{Proposition}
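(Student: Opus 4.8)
\emph{Proof plan for Proposition 3.2.}
The plan is to follow the solution forward from $x=z$, show it is trapped in a compact subinterval of $(0,\pi)$, and that it has infinitely many local extrema, alternately above and below $\pi/2$, whose abscissae tend to infinity. By the reflection $p\mapsto\pi-p$, which a direct substitution shows carries solutions of (8) to solutions of (8) and fixes $p\equiv\pi/2$, I may assume $A\le\pi/2$; and $A=\pi/2$ with $p'(z)=0$ is the constant solution by uniqueness, so I take $A<\pi/2$. Regardless of the sign of $p'(z)$, the two ingredients recorded next (together with the Sturm argument below, in the single case of a monotone run toward $\pi/2$) force $p$ to reach a local extremum at a finite point; restarting there and reflecting if necessary, I reduce to the case in which $z$ is a local minimum, $p'(z)=0$, $p''(z)=(m-1)\cot A>0$.

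First I record the barriers. Whenever $p'\ge0$, equation (8) gives $p''\le(1+p'^2)(m-1)\cot p$, and $h(p)=(m-1)\cot p$ satisfies $h'<0$ and $\lim_{p\to\pi}h(p)(\pi-p)=-(m-1)\le-1$ since $m\ge2$; so Lemma 3.1 applies and shows $p$ cannot approach $\pi$ on any interval where it is increasing, and the same applied to $\pi-p$ shows $p$ cannot approach $0$ on any decreasing interval. Second, the observation following (8) says a point with $p'=0$ is a strict local minimum if $p<\pi/2$ and a strict local maximum if $p>\pi/2$; hence $p$ admits no local maximum of value $<\pi/2$ and no local minimum of value $>\pi/2$, so a monotone arc moving toward $\pi/2$ cannot turn before reaching $\pi/2$, and at a level $p=\pi/2$ the slope is nonzero unless $p$ is constant.

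The core step, and the main obstacle, is to start the oscillation by producing one maximum followed by one minimum. From the local minimum at $z$ the solution increases and, by the observation, keeps increasing at least until it reaches $\pi/2$; the only way to fail to cross $\pi/2$ is monotone convergence to $\pi/2$ as $x\to\infty$, which I rule out by a Sturm comparison. Writing $w=\pi/2-p>0$, equation (8) becomes $w''+(1+w'^2)\frac{n-1}{x}w'+(1+w'^2)(m-1)\tan w=0$, and since $\tan w\ge w$ on $(0,\pi/2)$ this is a linear equation $w''+P(x)w'+R(x)w=0$ with $P\ge0$, $P,P'\to0$, and $R(x)=(1+w'^2)(m-1)\frac{\tan w}{w}\ge m-1>0$. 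After the Liouville substitution removing the first-order term the resulting potential stays bounded below by a positive constant, so by the Sturm oscillation theorem $w$ must vanish at a finite $x$, contradicting $w>0$ on a half-line. Thus $p$ crosses $\pi/2$; once above $\pi/2$ and increasing one has $p''<0$, so by Lemma 3.1 (bounding $p$ away from $\pi$) the solution must reach a finite local maximum $E_1\in(\pi/2,\pi)$, after which it decreases through $\pi/2$ and, by Lemma 3.1 applied to $\pi-p$, reaches a finite local minimum $e_1\in(0,\pi/2)$.

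With this maximum--minimum pair in hand, Lemmas 2.2 and 2.3 run the oscillation forever: Lemma 2.2 turns a maximum--minimum pair into the next maximum, strictly smaller, and Lemma 2.3 turns a minimum--maximum pair into the next minimum, strictly larger. Iterating alternately yields extrema at finite points $x_1<x_2<\cdots$ with $E_k\downarrow$ (all $>\pi/2$) and $e_k\uparrow$ (all $<\pi/2$); consecutive extrema straddle $\pi/2$, so $p$ crosses $\pi/2$ infinitely often, and all extrema -- hence all intermediate monotone values -- lie in the compact interval $[e_1,E_1]\subset(0,\pi)$, keeping $\cot p$ bounded. It remains to see $x_k\to\infty$: if $x_k\to x^{*}<\infty$, then $E_k$ and $e_k$, being squeezed toward a common value lying between them, converge to $\pi/2$, forcing $p(x^{*})=\pi/2$, $p'(x^{*})=0$ and hence, by uniqueness at the regular point $x^{*}>0$, the constant solution, a contradiction. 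Finally $p$ stays bounded away from $0$ and $\pi$, and since $\frac{d}{dx}\arctan p'=(m-1)\cot p-(n-1)\frac{p'}{x}\to-\infty$ as $p'\to+\infty$ (and $\to+\infty$ as $p'\to-\infty$), the slope cannot blow up on compact $x$-intervals; therefore $p$ exists on all of $[z,\infty)$, stays in $(0,\pi)$, and oscillates about $\pi/2$, as claimed.
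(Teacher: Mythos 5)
Your proof follows the paper's skeleton almost step for step: the reflection $p\mapsto \pi-p$, the barriers from Lemma 3.1, the local max/min observation, the iteration via Lemmas 2.2 and 2.3, and the endgame (extrema straddling $\pi/2$, bounded slope on compact intervals, uniqueness of the constant solution) are all the paper's steps. The one place you genuinely diverge is the crux --- excluding a monotone run of $p$ toward $\pi/2$ --- where you replace the paper's argument by a Sturm comparison, and that is exactly where there is a real gap. Writing $w=\pi/2-p$, your equation $w''+Pw'+Rw=0$ has damping coefficient $P=(1+w'^2)\,\frac{n-1}{x}$, and you assert without proof that $P\ge 0$, $P,P'\to 0$, so that after the Liouville substitution the potential $Q=R-\tfrac{1}{4}P^2-\tfrac{1}{2}P'$ stays bounded below by a positive constant. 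This is precisely the point that needs work: $P$ involves $w'^2$, and nothing you have established prevents $p'$ from developing tall thin spikes --- the nonlinear factor $(1+p'^2)$ allows $|p''|$ to be enormous exactly where $|p'|$ is large, so $\int p'\,dx<\infty$ does not by itself force $p'$ bounded, let alone give control of $P'$. Positivity of $R\ge m-1$ alone cannot rescue the argument: the overdamped linear equation $w''+3w'+2w=0$ has the positive, monotonically decaying solution $e^{-x}$, so oscillation genuinely requires the damping control you assumed. In short, the assertion ``$P,P'\to 0$'' is the heart of the matter, stated as if it were obvious.

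The missing ingredient is what the paper's proof supplies at this stage. The paper shows that at any zero $x_0$ of $p''$ with $x_0$ large one has $p'''(x_0)=(1+p'^2)\,p'\left(-\frac{m-1}{\sin^2 p}+\frac{n-1}{x_0^2}\right)<0$, so every such zero is a strict downcrossing; since $p$ is bounded and increasing there are points with $p''\le 0$ arbitrarily far out, hence $p''<0$ for all large $x$, hence $p'$ decreases to $0$ --- after which your $P,P'\to 0$ does hold and your Sturm comparison closes (the paper instead finishes with a quantitative overshoot estimate showing $p$ would cross $\pi/2$; either ending works once concavity is in hand). Alternatively one can bound $p'$ directly from $(\arctan p')'\le (m-1)\cot p\to 0$ together with $\int p'\,dx<\infty$, but some such argument must appear; as written your proof has none. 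Two smaller omissions in the same step: your dichotomy ``the only way to fail to cross $\pi/2$ is monotone convergence to $\pi/2$ as $x\to\infty$'' silently discards finite-time breakdown ($p'\to+\infty$ at a finite $x_F$), which must be excluded (your threshold/$\arctan$ slope bound does it, but you invoke it only at the very end), and also monotone convergence to a limit strictly below $\pi/2$; the latter happens to be covered by $R\ge m-1$ in your setup, but it should be said.
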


\begin{proof} Note that if $p'(x)=0$ then $p$ has a local maximum at $x$ if $p(x) > \pi /2$
and $p$ has a local minimum at $x$ if $p(x)<\pi /2$. We can assume that $p'(z) \geq 0$ and
$p'(x)>0$ for $x>z$ close to $z$ (if not we consider $\overline{p} = \pi - p$).
 We want to show that there exists $x_1 >z$ such that $p'(x_1 )=0$.

Let $[z,x_F)$ be the maximal interval of definition of $p$ and assume that $p'>0$ in this interval. Let
$y_F = \lim_{x \rightarrow x_F} p(x)$. Lemma 3.1 tells us precisely that it cannot happen that $x_F < \infty$ and $y_F =\pi$.
Also if $y_F > \pi /2$ then there is a final interval where $p''$ has a negative upper bound. It would then follow that $x_F < \infty$, and $p$ is increasing
and $p'$ decreasing close to $x_F$; so both have limits and $p$ could be extended beyond $x_F$. Then we must have
$y_F \leq \pi /2$. In the same way, if $x_F < \infty$ and if there is a final interval $(x,x_F )$ where $p''$ does not change sign
then $p$ could be extended beyond $x_F$. But if $p''$ keeps changing signs when $x$ approaches $x_F <\infty$  the 
lengths of the intervals where $p''$ has a fixed sign would approach 0 (as we approach $x_F$). It is easy to see that $p' (x)$  must be bounded 
and then also $p''$ must be bounded; it should then be clear again that it would exist $\lim_{x\rightarrow x_F} p'(x)$ and again $p$ could
be extended beyond $x_F$.  We are left to assume that  $x_F = \infty$ and $y_F \leq \pi /2$.
It is clear that there must be points converging to $\infty$ where $p'' \leq 0$. Assume
that there is a  point $x_0 > \sqrt{\frac{m-1}{n-1}} $ such that $p''(x_0 ) =0$. Then it follows from equation   (8) that 

$$ p'''(x_0 )= (1+p'(x_0 )^2 ) p'(x_0 ) \left( \frac{-(m-1)}{\sin^2 (p(x_0 ))} + \frac{n-1}{x_0^2} \right) <0 .$$

\noindent
It follows that $p''(x) <0$ for all $x>x_0$. Therefore there must exist $x_0 >0$ such that $p''(x) <0$ for all $x>x_0$. This implies that $\lim_{x\rightarrow \infty} p'(x)=0$ and then looking at 
equation (8) one sees that $y_F = \pi /2$. Then we can find $\varepsilon >0$ very small and $x>100 (n-1)$, $x>x_0$ such that $1 + p'(x)^2 <2$, 
$\cos (p(x))/ \sin (p(x)) =\varepsilon$ and then $p'(x) \geq 100 \varepsilon$ (from equation (8), since $p''(x)<0$). Then for $y\in (x,x+1)$, $p''(y) \geq -(1/50) p'(x)$. 
And then $p'(x+1) > p'(x) -(1/50) p'(x) > 50 \varepsilon$. And then $p(x+1) > p(x) + 50 \varepsilon > \pi/2$
(for $\varepsilon$ small enough we have that if $\cos (p(x))/ \sin (p(x)) =\varepsilon$ then $p(x) > \pi /2 - 2\varepsilon$). This is again
a contradiction and it follows that there exists a first value $x_1 >z$ which is a local maximum of $p$.

The same argument 
can now be used to show that there must be a first value $x_2 > x_1$ which is a local minimum of $p$.
Then $p$ will oscillate around $\pi /2$. But moreover it follows from the Lemma 2.2 and Lemma 2.3  that the local
maxima and minima stay bounded away from $\pi$ and 0 (respectively). If the values of $p$ at the  local extrema 
also stay bounded away $\pi /2$ it is elementary and easy to see from equation (8)  
 that the distance between consecutive extrema of $p$  will have
a positive lower bound and  therefore $p$ would be defined for all $x>z$. The only possibility left would be that  
there exists $x_0 >0$ such that $\lim_{x\rightarrow x_0} p(x) = \pi /2$. But then again one would have that  $\lim_{x\rightarrow x_0 } p'(x) = 0$
and then we would have that $p$ must be the constant solution. This finishes the proof of the proposition

\end{proof}

Then coming back to the case P1 we choose a small $z>0$ and apply the proposition to see that the solution $p$ of equation (8) determines a complete
embedded minimal hypersurface (diffeomorphic to  $\re^n \times {\bf S}^{m-1}$).

In the case P3 we can consider for instance the case when there is a point $({\bf x_I} , 0)$ in $\varphi$. Then we choose $z>{\bf x_I}$ close to ${\bf x_I}$ and 
again apply the proposition to see that the corresponding solution $p$ of  equation (8) determines a complete
embedded minimal hypersurface (diffeomorphic to  ${\bf S}^{n-1} \times \re^m$).

Finally in case P2 we can assume we have a point $({\bf x_I},y_0 )$  in $\varphi$ with  $y_0 \in (0,\pi /2 )$. Then  we have two branches of
$\varphi$ coming from the point, each one can be described by a function $p$. One of them will be increasing and the other decreasing after
${\bf x_I}$. For each of the branches we can apply the proposition to see that the corresponding solution $p$ of equation (8)
 is defined in all $({\bf x_I}, \infty)$ and oscillates around $\pi /2$.
It follows that one has a
minimal immersion of $S^{m-1} \times S^{n-1} \times \re$ with self-intersections.

 This completes the proof of Theorem 1.1

\section{$O(n) \times O(m)$-invariant constant mean curvature hypersurfaces}

In this section we will show existence of some invariant constant mean curvature hypersurfaces
by studying the solutions of equation (2), with $h$ a positive constant. Of course the equation is considerably more
complicated than equation (8). Moreover the clear description obtained in Theorem 1.1 will not be possible
in this case. The first observation is that the isoperimetric problem in $\re^n \times S^m$ can be solved and
it is easy to see by standard symmetrization arguments that the hypersurfaces which are the boundaries of
the isoperimetric regions are $O(n) \times O(m)$-invariant. And as usual they have constant mean curvature. 
For small values of the volume the corresponding isoperimetric region will be a ball bounded by constant mean
curvature hypersurface which will be an $O(n) \times O(m)$-invariant sphere. This will be given by a solution
of equation (2), for some value of $h$, with $p(0)=A >0 $ and $p(x)=0$ at some value $x>0$. So we know that
the situation will in general  be different to what happened for the case of minimal hypersurfaces studied
in the previous section. 

As in Section 2 we let $x_h \in (0,\pi /2 )$ be the value such that $(m-1)\cos (x_h )/\sin (x_h ) =h$. The constant function
$p = x_h$ is of course a solution. Consider solutions $p$ of equation (2) with initial 
conditions $p(0)=A$, $p'(0)=0$. We write $p=p(A,x)$. Let 

$$w(x) = \frac{\partial p(A,x)}{\partial A} (x_h ,x).$$ 

\noindent
Then $w$ satisfies $w(0) =1$, $w'(0)=0$ and

\begin{equation} w''(x) = -\frac{m-1}{\sin^2 (x_h )} w  -\frac{n-1}{x} w'.
\end{equation}

The following is easy to check:

\begin{Lemma}
Solutions of equation (9) are oscillating.
\end{Lemma}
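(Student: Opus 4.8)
The plan is to recognize equation (9) as a constant-coefficient, Bessel-type linear ODE and to establish oscillation by a Sturm comparison argument. Writing $c = \frac{m-1}{\sin^2(x_h)}$, which is a strictly positive constant since $x_h \in (0,\pi/2)$ and $m \geq 2$, equation (9) reads $w'' + \frac{n-1}{x} w' + c\, w = 0$. Its solutions can in fact be written explicitly in terms of Bessel functions (after the rescaling $t = \sqrt{c}\,x$ the equation becomes $w_{tt} + \frac{n-1}{t} w_t + w = 0$), and those are classically known to oscillate; but I would prefer a self-contained argument that does not invoke Bessel asymptotics.

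First I would remove the first-order term by the Liouville normal-form substitution $w(x) = x^{-(n-1)/2}\, u(x)$, valid for $x > 0$. A direct differentiation shows that $u$ satisfies
\[
u''(x) + Q(x)\, u(x) = 0, \qquad Q(x) = c - \frac{(n-1)(n-3)}{4\,x^2}.
\]
The point of this reduction is that $Q(x) \to c > 0$ as $x \to \infty$: the correction is $O(1/x^2)$ and hence asymptotically negligible. Concretely, choose $X_0 > 0$ so large that $\frac{(n-1)(n-3)}{4 x^2} < c/2$ for all $x \geq X_0$ (this is automatic when $n \leq 3$, where the correction term $-\frac{(n-1)(n-3)}{4x^2}$ is nonnegative, and requires only a lower bound on $x$ when $n > 3$), so that $Q(x) \geq c/2$ on $[X_0, \infty)$.

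Then I would invoke the Sturm comparison theorem on $[X_0,\infty)$, comparing $u'' + Q\,u = 0$ with the constant-coefficient equation $v'' + \tfrac{c}{2}\,v = 0$, whose nontrivial solutions $\sin\!\big(\sqrt{c/2}\,(x-a)\big)$ have consecutive zeros exactly $\pi\sqrt{2/c}$ apart. Since $Q(x) \geq c/2$ there, Sturm's theorem forces every nontrivial solution $u$ to vanish at least once in each interval of length $\pi\sqrt{2/c}$; hence $u$ has arbitrarily large zeros and oscillates. Because $w = x^{-(n-1)/2}u$ and the factor $x^{-(n-1)/2}$ is strictly positive for $x>0$, the zeros of $w$ coincide with those of $u$, so $w$ oscillates as well. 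The only genuinely delicate point is the bookkeeping in the normal-form computation together with the observation that the $1/x^2$ perturbation cannot destroy oscillation at infinity; everything else is a routine application of Sturm theory.
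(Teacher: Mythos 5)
Your proof is correct. Note that the paper does not actually supply an argument for this lemma: it is introduced with ``the following is easy to check'' and then used directly in the proof of Theorem 1.2, the implicit justification being exactly your opening remark --- after rescaling, equation (9) is a Bessel-type equation $w''+\frac{n-1}{x}w'+c\,w=0$ with $c=\frac{m-1}{\sin^2 (x_h)}>0$, whose oscillatory character is classical. What you add is a self-contained verification of that fact. Your Liouville substitution is computed correctly: with $w=x^{-(n-1)/2}u$ the damping term drops out and $u''+Q u=0$ with $Q(x)=c-\frac{(n-1)(n-3)}{4x^2}$, so $Q> c/2$ on some $[X_0,\infty)$ (indeed $Q\geq c$ when $n\leq 3$), and Sturm comparison against $v''+\frac{c}{2}v=0$ places a zero of $u$ in every subinterval of $[X_0,\infty)$ of length $\pi\sqrt{2/c}$; since $x^{-(n-1)/2}>0$, these are zeros of $w$, which is what ``oscillating'' means here and is all that the proof of Theorem 1.2 uses (it needs $w$ to vanish so that $p(A,\cdot)$ develops a critical point for $A$ near $x_h$). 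So your route and the paper's intended route are essentially the same modulo the level of detail: the paper implicitly appeals to known Bessel asymptotics, while your argument trades that citation for a short, elementary comparison computation.
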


\begin{proof}({\it Theorem 1.2})
It follows from the previous lemma that for $A$ close enough to $x_h$ the corresponding solution $p(A,x)$ of
equation (2) must have  a local minimum at a value $x_1 >0$. 
Now we apply Lemma 2.2 to show that there exists $x_2 >x_1$ such that $p$ is increasing in $(x_1 ,x_2 )$ and
$x_2$ is a local maximum of $p$. Then by applying Lemma 2.3 and Lemma 2.2 we see that there exists a sequence
of consecutive local maxima and minima $x_1 < x_2 <x_3 < x_4 ...$  such the sequence of local maxima $p(x_{2i})$ 
is decreasing (and bounded below by $x_h$) and the sequence of local minima $p(x_{2i +1})$ is increasing
(and bounded above by $x_h$). Assume that one of the limits of these monotone sequences is not
$x_h$, for instance $\lim p(x_{2i}) = y >x_h$. If the maximal interval of definition of $p(A,x)$ were a finite interval
$(0,x_f )$ consider the solution $t$ of the equation 

$$t''(x)= (1+t'(x)^2) \left( (m-1) \frac{\cos (t(x))}{\sin (t(x))} -(n-1) \frac{t'(x)}{x_f} - h\sqrt{1+t'(x)^2} \right) $$

\noindent 
with initial conditions $t(0)= y$, $t'(0)= 0$. Let $r>0$ be the first value such that $t(r)=x_h$. The for each $s<r$ for
all $i$ big enough $x_{2i +1} - x_{2i} >s$. This is of course a contradiction and therefore   $p$
would be defined on $(0,\infty )$. If the limit of both monotone sequences is $x_h$ and the maximal
interval of definition of $p$ where a final interval $(0,x_f )$ then we would have that 
$\lim_{x\rightarrow x_f} p(x) = x_f$ and $\lim_{x\rightarrow x_f} p'(x) = 0$: then $p$ must be the constant
solution and we would again reach a contradiction. It follows that $p$ is defined for all $\re_{>0}$ and gives
an embedded hypersurface of constant mean curvature $h$.  

We have proved that there is an open interval containing $x_h$ such that for all $A$ in the interval $p(A,x)$ determines
an embedding of $\re^n \times S^{m-1}$ of constant mean curvature $h$.
Now assume   that $x_h > A>0$ and the corresponding solution $p(A,x)$
does not determine such an embedding. Then from
the previous discussion follows that  $p(A ,x)$ is an increasing function in a maximal interval of definition $(0,x_F )$. 
Let $y_F = \lim_{x \rightarrow x_F} p(A ,x)$.
It follows from Lemma 3.1 that it cannot happen that $x_F < \infty$ and $y_F = \pi$. If $y_F > x_h$ then $p''(A , x) <0$
for $x$ close to $x_F$. It follows that it exists $\lim_{x\rightarrow x_F} p'(A , x)$ and $x_F < \infty$. Then $p(A ,x)$ could be
extended beyond $x_F$, reaching a contradiction. Then we can  assume that $y_F \leq x_h$. If $x_F < \infty$ and there
is a sequence of points $x_i$ approaching $x_F$ where $p'' (x_i ) =0$ one can see from equation (2) that $p'$ must stay bounded.
Then $p''$ must also stay bounded and then   it exists $\lim_{x\rightarrow x_F} p'(A , x)$. This would imply again that $p(A,x)$ could
be extended beyond $x_F$. And the same conclusion can be reached if $p''$ has a constant sign close to $x_F$. It follows
that we must have that $x_F = \infty$ and therefore $p(A,x)$ determines
an embedding of $\re^n \times S^{m-1}$ of constant mean curvature $h$.

Finally let $b<\pi$ and assume that 
for all $a$, $x_h <a<b$ the corresponding solution $p(a,x)$ determines
an embedding of $\re^n \times S^{m-1}$ of constant mean curvature $h$ but this is not true for $p(b ,x)$. Then from
the previous discussion follows that  $p(b ,x)$ is a decreasing function in a maximal interval of definition $(0,x_F )$. 

Let $y_F = \lim_{x \rightarrow x_F} p(b ,x)$.

If $y_F >0$ then it is elementary to see that if $\liminf p'(b,x) > -\infty$ then $p''(b,x)$ is bounded and so 
the limit $\lim_{x\rightarrow x_F} p'(x)$ exists and is finite. It then follows that the solution $p(b,x)$ could be
extended beyond $x_F$. Then we must have that  $\lim_{x\rightarrow x_F} p'(x) = -\infty$. This corresponds to the situation when
in equation (1) $x'(s)=0$. But then one can for instance study the solution by
considering equation (3) instead. The inverse function $f=p^{-1}$ verifies $f'(y_F )=0$ and could be extended to
an interval containing $y_F$. It then follows that for values close to $b$ the corresponding solution of equation (1) 
has the same behavior. This contradicts that for every $a<b$ the solution $p(a,x)$ decreases until reaching  a local minimum. 

It follows that $y_F = 0$. Consider the function

$$q(x) = \frac{p''(x)}{1+(p'(x))^2} =(m-1) \frac{\cos (p(x))}{\sin (p(x))} -(n-1)\frac{p'(x) }{x} -h \sqrt{1+(p'(x)^2} .$$

At a point $x_0$ at which $p''(x_0 ) =0$ we have that 

$$q'(x_0 )= p'(x_0 ) \left( -\frac{m-1}{\sin^2 (p(x_0 ))} + \frac{n-1}{x_0^2} \right).$$

Then for $x_0$ close to $x_F$ we would have that $q'(x_0 ) > 0$. It follows that $q$ and $p''$ where negative before $x_0$
and positive after $x_0$. Therefore $p''$ must have a constant sign close to $x_F$. Then the limit $\lim_{x\rightarrow x_F} p'(x) = L $
exists. If $L$ is finite then it is clear from equation (2) that $p''$ must be positive close to $x_F$. Then there there exist $x_1$ 
close to $x_F$ such that for $x\in (x_1 , x_F )$ we have $p''(x) >1/(2p(x))$.  Since $L$ is finite the speed at which $p$ reaches 0
is bounded. Then the previous inequality would imply that $p'$ must approach $-\infty$. Then  $\lim_{x\rightarrow x_F} p'(x) = -\infty$
and so the solution $p(b,x)$ determines an embedding of $S^{n+m-1}$ of constant mean curvature $h$. 
 
\end{proof}

\section{Stability of the $O(n) \times O(m)$-invariant constant mean curvature hypersurfaces}

We will now consider the stability of the constant mean curvature hypersurfaces described in the past sections. 

We will prove the instability of the $O(n) \times O(m)$-invariant noncompact constant mean curvature  hypersurfaces considered in Theorem 1.1 and Theorem 1.2. 
The arguments for instability go along the lines of the ones given by Pedrosa and Ritor\'e in \cite{Pedrosa}, see also \cite{Xin}.

It is said that the immersion of a hypersurface $j:\Sigma^{k-1}\rightarrow M^k$  with constant mean curvature is stable, if and only if  $Q_{\Sigma}(u)\geq 0$, for all differentiable functions $u:\Sigma^{k-1}\rightarrow \re$, with compact support and such that $\int_{\Sigma} u dA=0$ (see \cite{Barbosa}). With the index form $Q_{\Sigma}(u)$  given by

\begin{equation}
\label{index}
Q_{\Sigma}(u)= \int_{\Sigma} \{ \| \nabla u \|^2  -(Ric(N)+|\textit{B}|^2)u^2\}d{\Sigma},
\end{equation}
\noindent where $N$ is a unit vector normal to  $\Sigma$, $d{\Sigma}$ is the volume element on $\Sigma$, $Ric(N)$, the Ricci curvature of $N$, and $|\textit{B}|$ is the norm of the second fundamental form $\textit{B}$ of $\Sigma$. This is also written as

\begin{equation}
\label{index2}
Q_{\Sigma}(u)= -\int_{\Sigma} u \ Lu \ d{\Sigma},
\end{equation}

\noindent
where $L$ is the Jacobi operator $L(u) = \Delta u +(Ricci (N) + |\textit{B}|^2) u$.

For a hypersurface $ \Sigma \subset \re^n \times {\bf S}^m$ invariant by the  $O(n) \times O(m)$ action and generated by  a curve 
$\varphi (t)$ as in section (2) which solves

\begin{equation}
\label{independent}
\begin{aligned}
 x'(t)&=\cos \sigma (t) \\
 y'(t) &=\sin  \sigma (t)\\
 \sigma '(t)& =(m-1)\cot(y(t)) \cos \sigma (t)-(n-1)\frac{\sin \sigma (t)}{x(t)}- h.  \\
\end{aligned}
\end{equation}

\noindent
With $h$ constant, direct computation gives the following:

\noindent
$Ric(N)=(m-1)\cos^2 \sigma$,

\noindent
$d\Sigma= x^{n-1} \sin^{m-1}(y) d \omega_{m-1} d \omega_{n-1} dt$,  $d \omega_{m}$ is the volume element of the m-sphere,

\noindent
$|\textit{B}|^2= \sigma'^2+(m-1) \cot^2(y) \cos^2(\sigma)+\frac{(n-1)}{x^2} \sin^2(\sigma)$,

\noindent
and $\Delta u=u'' +\left( (n-1)  \frac{x'(t)}{x(t)}+(m-1) \cot(y(t)) y'(t)\right)u'$, for an invariant function $u(t)$.

Hence, we can rewrite the index form for hypersurfaces generated by solutions  of (12) on an invariant 
function $u(t)$, $t\in (t_0 ,t_1 )$, as

$$Q_{\Sigma}(u)= -\omega_{m-1}\omega_{n-1} \int_{t_0}^{t_1} u L u \  x^{n-1} \sin^{m-1}(y) dt,$$
\noindent with 

\begin{equation}
\begin{aligned}
\label{L}
L u&= u'' + \left( \frac{(n-1)}{x}x'(t)+(m-1) \cot(y) y'(t)\right)u'\\
    &+\left((m-1) \cos^2 \sigma+\sigma'^2+(m-1) \cot^2(y) \cos^2(\sigma)+\frac{(n-1)}{x^2} \sin^2(\sigma)\right)u.
\end{aligned}
\end{equation}

There are two canonical examples of invariant  hypersurfaces of constant mean curvature $h$  in $\re^n \times {\bf S}^m$:
the product $\Sigma_h^1 = {\bf S}^{n-1} \times {\bf S}^m$ of a sphere of constant mean curvature $h$ in $\re^n$ with
${\bf S}^m$ and the product $\Sigma_h^2 = \re^n \times {\bf S}^{m-1}$ of $\re^n$ and a hypersphere of mean curvature $h$ in
${\bf S}^m$. In terms of equation (12) they are given by the constant solutions  
 $\varphi (t) =((n-1)/h , t)$ and $\varphi(t)=(t,x_h )$, respectively. We will first consider these two cases: 

\begin{Lemma}
\label{constant}
 For $m,n>1$, the hypersurface $\Sigma_h^2$ given by the constant solution  of equation (12) (with $h$ constant), $\varphi(t)=(t, x_h)$, is unstable.   
\end{Lemma}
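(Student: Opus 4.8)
The plan is to substitute the constant solution $\varphi(t)=(t,x_h)$ into the data computed above and reduce the index form to a one–dimensional weighted quadratic form. For this solution one has $\sigma\equiv 0$, hence $\sigma'\equiv 0$, $\cos\sigma=1$, $\sin\sigma=0$, $x(t)=t$ and $y\equiv x_h$. Using the defining relation $(m-1)\cot(x_h)=h$ one computes $Ric(N)=m-1$ and $|B|^2=(m-1)\cot^2(x_h)=h^2/(m-1)$, so that the zeroth–order coefficient in (14) is the positive constant $c_0:=Ric(N)+|B|^2=(m-1)+\frac{h^2}{m-1}$. The first–order term of $\Delta$ collapses to $\frac{n-1}{t}u'$, so the Jacobi operator acting on invariant functions becomes $Lu=u''+\frac{n-1}{t}u'+c_0\,u$, while $d\Sigma$ carries the constant weight factor $\sin^{m-1}(x_h)$ and the radial weight $t^{n-1}$.

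Second, I would integrate by parts. Writing $C:=\omega_{m-1}\omega_{n-1}\sin^{m-1}(x_h)>0$ and using $u''t^{n-1}+\frac{n-1}{t}u'\,t^{n-1}=(t^{n-1}u')'$, for any $u$ with compact support in $(t_0,t_1)\subset(0,\infty)$ the boundary terms vanish and one obtains $Q_\Sigma(u)=C\int_{t_0}^{t_1}\big((u')^2-c_0\,u^2\big)\,t^{n-1}\,dt$. Thus proving instability amounts to exhibiting a compactly supported $u$ satisfying the volume constraint $\int u\,t^{n-1}\,dt=0$ for which $\int\big((u')^2-c_0\,u^2\big)t^{n-1}\,dt<0$. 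The key structural point is that $c_0$ is a fixed positive constant while the Dirichlet term can be made small by spreading the support out in the noncompact $\re^n$ direction.

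Third, I would construct the test functions. Fix $L>2\pi/\sqrt{c_0}$ and, on an interval $[R,R+L]$, set $u_i(t)=\sin\big(i\pi(t-R)/L\big)$ for $i=1,2$, extended by zero (these lie in $H^1_0$ and may be smoothed if one insists on differentiability). With the weight frozen at the constant value $R^{n-1}$, the functions $u_1,u_2$ are orthogonal and diagonalize the form, the value on $u_i$ being the positive multiple $\frac{R^{n-1}L}{2}$ of $(i\pi/L)^2-c_0<0$; hence the form is negative definite on $\mathrm{span}\{u_1,u_2\}$. Since on $[R,R+L]$ the true weight $t^{n-1}$ differs from $R^{n-1}$ by the relative factor $(1+L/R)^{n-1}-1\to 0$ as $R\to\infty$, for $R$ large enough the form with the genuine weight stays negative definite on the two–dimensional space $\mathrm{span}\{u_1,u_2\}$. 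The volume constraint $\int u\,t^{n-1}\,dt=0$ is a single linear condition, so it meets this two–dimensional space in a subspace of dimension at least one; any nonzero $u$ there is admissible and satisfies $Q_\Sigma(u)<0$, proving $\Sigma_h^2$ is unstable.

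The main obstacle is organizational rather than analytic: one must accommodate the mean–zero volume constraint together with the nonconstant weight $t^{n-1}$. Both are dispatched by the dimension count above — producing a two–dimensional subspace on which $Q_\Sigma$ is negative definite guarantees a negative direction surviving the codimension–one constraint — so the only genuine estimate required is the elementary perturbation bound showing that freezing $t^{n-1}$ at $R^{n-1}$ alters the quadratic form negligibly for $R$ large.
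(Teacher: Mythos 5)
Your proposal is correct: the reduction matches the paper exactly (your $c_0=(m-1)+\frac{h^2}{m-1}$ is the paper's constant $k=\bigl(\bigl(\tfrac{h}{m-1}\bigr)^2+1\bigr)(m-1)$, and your weighted one-dimensional form is just the paper's $\omega_{m-1}\sin^{m-1}(x_h)\int_{\re^n}\|\nabla u\|^2-k\,u^2\,dx$ written in radial coordinates), but your finishing step is genuinely different. The paper concludes in one line by dilation: take any compactly supported radial $u\neq 0$ with $\int_{\re^n}u\,dx=0$ and set $u_\alpha(x)=u(\alpha x)$; the mean-zero constraint is preserved under scaling, while $Q_{\Sigma_h^2}(u_\alpha)$ is a positive multiple of $\alpha^{-n}\bigl(\alpha^2\int\|\nabla u\|^2\,dx-k\int u^2\,dx\bigr)$, which is negative for $\alpha$ small --- the remark that the bottom of the spectrum of $\re^n$ is $0$. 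You instead place two sine bumps of fixed wavelength far out at radius $R$, freeze the weight $t^{n-1}$ at $R^{n-1}$, verify negative definiteness of the frozen form on the two-dimensional span, control the error by the factor $(1+L/R)^{n-1}-1\to 0$, and intersect with the codimension-one constraint. Both arguments are sound; the scaling trick is shorter because the constraint is automatic and no weight-perturbation estimate or smoothing of the Lipschitz corners is needed, whereas your dimension-count device is more robust (it only uses the polynomial growth of the weight near infinity) and is in fact the same mechanism the paper itself deploys in the proof of Theorem 1.3, where two disjointly supported functions $u_1,u_2$ with $Q_\Sigma(u_i)<0$ are combined linearly to satisfy $\int_\Sigma \bar u=0$.
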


\begin{proof}
Let $m,n>1$. An invariant function $u: \Sigma_h^2 \rightarrow \re$ is a radial function on $\re^n$. We have 

$$Q_{\Sigma_h^2 }(u)= \omega_{m-1}\sin^{m-1}  (x_h )   \int_{\re^n}  \| \nabla u \|^2   - k \ u^2  dx $$

\noindent
with 
 $k=\left(\left(\frac{h}{m-1}\right)^2+1\right)(m-1)$.

Then one chooses any $u \neq 0$ with compact support and $\int_{\re^n} u \ =0$. Then for each $\alpha >0$ the
function $u_{\alpha} (x) = u(\alpha x)$ all have mean 0 and one can pick $\alpha$ so that $Q_{\Sigma_h^2}(u_{\alpha} ) <0$
(this is of course just the well known fact that the {\it bottom of the spectrum} of $\re^n$ is 0).

\end{proof}

\begin{Lemma}
\label{constant2}
 For $m,n>1$, the hypersurface $\Sigma_h^1$ given by the constant solution  of equation (12) (with $h$ constant), $\varphi(t)=((n-1)/h,t)$, is unstable if and only if $h>\sqrt{ m(n-1) }$.   
\end{Lemma}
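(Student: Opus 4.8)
The plan is to substitute the constant solution $\varphi(t)=((n-1)/h,\,t)$ directly into the data computed just above the lemma and exploit a crucial simplification. Here $x\equiv R:=(n-1)/h$ and $\sigma\equiv\pi/2$ (so that $x'=\cos\sigma=0$ and $y'=\sin\sigma=\pm1$), hence $\cos\sigma=0$ everywhere along the curve. This annihilates both $\cos^2\sigma$ terms: $Ric(N)=(m-1)\cos^2\sigma=0$, and
\[
|B|^2=\sigma'^2+(m-1)\cot^2(y)\cos^2\sigma+\frac{n-1}{x^2}\sin^2\sigma=\frac{n-1}{R^2}=\frac{h^2}{n-1}.
\]
In particular the potential $Ric(N)+|B|^2\equiv\frac{h^2}{n-1}$ is a constant, independent of position, so the Jacobi operator reduces to $L=\Delta_{\Sigma}+\frac{h^2}{n-1}$, where $\Delta_\Sigma$ is the Laplacian of the \emph{compact} Riemannian product $\Sigma_h^1=\S^{n-1}(R)\times\S^m$. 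Since $\Sigma_h^1$ is closed, every test function has compact support, and integrating by parts in the index form yields
\[
Q_{\Sigma_h^1}(u)=\int_{\Sigma_h^1}\|\nabla u\|^2\,d\Sigma-\frac{h^2}{n-1}\int_{\Sigma_h^1}u^2\,d\Sigma.
\]

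Next I would reduce stability to a single spectral inequality. The admissible variations satisfy $\int_{\Sigma_h^1}u\,d\Sigma=0$, so the test functions range exactly over the $L^2$-orthogonal complement of the constants; on that subspace the variational characterization of eigenvalues gives $\int\|\nabla u\|^2\ge\lambda_1(\Sigma_h^1)\int u^2$ with equality attained, where $\lambda_1$ is the first nonzero eigenvalue of $-\Delta_\Sigma$. Hence $\Sigma_h^1$ is stable if and only if $\frac{h^2}{n-1}\le\lambda_1(\Sigma_h^1)$. For a Riemannian product the first nonzero eigenvalue is the minimum of the factors' first eigenvalues, and using $\lambda_1(\S^{n-1}(R))=\frac{n-1}{R^2}=\frac{h^2}{n-1}$ together with $\lambda_1(\S^m)=m$, I obtain $\lambda_1(\Sigma_h^1)=\min\!\left(\frac{h^2}{n-1},\,m\right)$.

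Combining the two steps finishes the proof: stability holds iff $\frac{h^2}{n-1}\le\min\!\left(\frac{h^2}{n-1},m\right)$, i.e. iff $\frac{h^2}{n-1}\le m$, i.e. iff $h\le\sqrt{m(n-1)}$; equivalently $\Sigma_h^1$ is unstable precisely when $h>\sqrt{m(n-1)}$. To display the instability by an explicit function in the spirit of Lemma 5.1, I would take the invariant, mean-zero zonal harmonic $u=\cos(y)$ on the $\S^m$ factor, which satisfies $\Delta_{\S^m}u=-mu$ and $\int_0^\pi\cos(t)\sin^{m-1}(t)\,dt=0$; a direct integration by parts then gives $Q_{\Sigma_h^1}(\cos y)=\left(m-\frac{h^2}{n-1}\right)\int_{\Sigma_h^1}\cos^2(y)\,d\Sigma$, which is negative exactly when $h>\sqrt{m(n-1)}$.

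The only genuinely delicate points are the computation of the potential and of $\lambda_1$. One must check that the $\cot^2(y)$ contribution to $|B|^2$ drops out because $\cos\sigma=0$, so that the potential really is constant (in contrast to a general solution of (12)), and that the radius is $R=(n-1)/h$, which forces the $\S^{n-1}(R)$ contribution to the spectrum to equal $\frac{h^2}{n-1}$. That is precisely the marginal $Q=0$ direction arising from translating the sphere inside $\re^n$ (a Jacobi field of the ambient isometries), so it never produces instability; the entire instability is carried by the $\S^m$ factor, whose first eigenvalue $m$ sets the threshold $h=\sqrt{m(n-1)}$.
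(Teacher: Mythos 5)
Your proposal is correct, and its core is the same spectral argument the paper uses: the potential $Ric(N)+|B|^2$ is the constant $\tfrac{h^2}{n-1}$ along this solution (your check that the $\cot^2(y)$ term dies because $\cos\sigma=0$ matches the paper's computation), so stability becomes a comparison between $\tfrac{h^2}{n-1}$ and a first nonzero eigenvalue, with the threshold set by $\lambda_1({\bf S}^m)=m$. The genuine difference is which space of test functions you analyze. The paper restricts at the outset to functions that are constant along the ${\bf S}^{n-1}$ fibers, i.e.\ functions on ${\bf S}^m$, and then reads off the instability condition $m(n-1)<h^2$ from the Rayleigh quotient on ${\bf S}^m$; taken literally this fully proves only the ``if'' half (instability when $h>\sqrt{m(n-1)}$), since stability requires $Q_\Sigma(u)\ge 0$ for \emph{all} mean-zero $u$, not just fiberwise-constant ones. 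Your proof closes exactly this gap: by computing the full product spectrum, $\lambda_1\bigl({\bf S}^{n-1}(R)\times {\bf S}^m\bigr)=\min\bigl(\tfrac{h^2}{n-1},m\bigr)$ with $R=(n-1)/h$, you see that the only other candidate low eigenvalue, $\lambda_1({\bf S}^{n-1}(R))=\tfrac{h^2}{n-1}$, coincides exactly with the potential --- these are the translation Jacobi fields, giving $Q=0$ --- so variations along the ${\bf S}^{n-1}$ factor can never destabilize, and stability for $h\le\sqrt{m(n-1)}$ follows honestly. Your explicit zonal test function $u=\cos(y)$ also recovers the paper's instability direction constructively. In short: same mechanism, but your version proves both implications of the ``if and only if,'' whereas the paper's written argument verifies the equivalence only within a restricted class of variations.
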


\begin{proof} Note that $\Sigma_h^1 = {\bf S}^{n-1} \times {\bf S}^m$ and an invariant function $u:\Sigma_h^1 \rightarrow \re$  is constant along the $(n-1)$-spheres and
so it can be consider as a function on ${\bf S}^m$.  Then 

$$Q_{\Sigma_h^1 }(u)= \omega_{n-1} \left( \frac{n-1}{h} \right)^{n-1}   \int_{{\bf S}^m}  \| \nabla u \|^2   - \frac{h^2}{n-1} \ u^2  dV_{{\bf S}^m}$$

Hence, the instability condition $Q_{\Sigma}(u)<0$, is equivalent to  
\begin{equation}
\label{quotient2}
\frac{\int_{\S^m} |\nabla u|^2 dV_{\S^m}}{\int_{\S^m}  u^2  d{V_{\S^m}}}< \frac{h^2}{n-1}.
\end{equation}

Thus, by recalling that $m\leq\frac{\int_{\S^m} |\nabla u|^2 dV_{\S^m}}{\int_{\S^m}  u^2  d{V_{\S^m}}}$ for each $u$ with mean 0, since the dimension $m$ is the first Eigenvalue of
(the positive Laplacian on) $S^m$, the instability condition reduces to $m(n-1) < h^2$. 
\end{proof}

We are now ready to prove  Theorem 1.3.

\begin{proof} ({\it Theorem 1.3})

Consider a hypersurface  $\Sigma$ that belongs to one of the families of noncompact constant mean curvature hypersurfaces described in Theorem 1.1 or Theorem 1.2.

Let $f(s)=(x(s),y (s),\sigma (s))$ be the solution of equation (12) that generates $\Sigma$. We have seen in the previous sections  that in all the cases 
considered $\Sigma$ is described by a curve which has at least one end which is given by the graph of a function $p$ satisfying equation (2).  We have seen that 
$p$ has a sequence of maxima and minima as $x \rightarrow \infty$.

Let $\{(x_1,y_1),(x_2,y_2),(x_3,y_3),...\}$ be the set of alternating maxima and minima of $f(s)$, $p(x_i )= y_i $. Consider the function $u=\sin(\sigma)$. Direct computation yields,
$$uLu=(n-1)\frac{y'^2}{x^2},$$
\noindent with $L$ the operator given by eq. (\ref{L}).  Of course,  $u$  can be extended by symmetry to a field on all of $\Sigma$. It follows that 
$$Q_{\Sigma}(u)=-\int_{\Sigma}uLu \ \ d\Sigma<0.$$

\noindent We next note that $u$ vanishes at the set of alternating maxima and minima, and consider
\begin{equation}
\label{u1}
{u}_1(x)= \begin{cases}
   u(x)  & \text{if}\  x \in [x_1 ,x_2 ] \\
  						0    &    otherwise
  						\end{cases} 
\end{equation}
and similarly,
\begin{equation}
\label{u2}
{u}_2(x)= \begin{cases}
   u(x)  & \text{if}\  x \in [x_2 ,x_3 ] \\
  						0    &    otherwise
  						\end{cases} 
\end{equation}

These two functions have disjoint supports and satisfy $Q_{\Sigma}(u_i)<0$, $i=1,2$. It follows that by taking a linear combination of the two, $\bar{u}=C_1u_1+C_2u_2$, we can construct a function such that $\int_{\Sigma}\bar{u}=0$ and $Q_{\Sigma}(\bar{u})<0$.

\end{proof}

\end{document}